\tikzset{/tikz/commutative diagrams/arrow style=tikz,>=stealth} 
\tikzset{mm/.style={execute at begin node=$\displaystyle, execute at end node=$}}
\tikzset{tikzob/.style={commutative diagrams/every diagram, every cell}}
\tikzset{tikzar/.style={commutative diagrams/.cd, every arrow, every label, font={\small}}}
\tikzset{tikzsquiggle/.style={decorate, decoration={
    snake,
    segment length=8pt,
    amplitude=.9pt,post=lineto,
    post length=2pt}}}
\tikzset{cross line/.style={preaction={draw=white, -, line width=6pt}}}
\newcommand{\newrefformat}[2]{}
\theoremstyle{plain}   
\newtheorem{thm}{Theorem}[section] 
\let\c@cor\c@thm\makeatother
\newtheorem{lemma}{Lemma}[section]
\let\c@lemma\c@thm\makeatother
\newtheorem{prop}{Proposition}[section]
\let\c@prop\c@thm\makeatother
\let\c@claim\c@thm\makeatother
\theoremstyle{definition}
\newtheorem{defn}{Definition}[section]
\let\c@defn\c@thm\makeatother
\let\c@const\c@thm\makeatother
\let\c@notn\c@thm\makeatother
\let\c@outline\c@thm\makeatother
\theoremstyle{remark}
\newtheorem{rem}{Remark}[section]
\let\c@rem\c@thm\makeatother
\newtheorem{ex}{Example}[section]
\let\c@ex\c@thm\makeatother
\let\c@observationn\c@thm\makeatother
\let\c@equation\c@thm
\numberwithin{equation}{section}
\let\c@table\c@thm
\numberwithin{table}{section}
\crefname{lemma}{Lemma}{Lemmas}
\crefname{thm}{Theorem}{Theorems}
\crefname{defn}{Definition}{Definitions}
\crefname{notn}{Notation}{Notations}
\crefname{const}{Construction}{Constructions}
\crefname{prop}{Proposition}{Propositions}
\crefname{rem}{Remark}{Remarks}
\crefname{cor}{Corollary}{Corollaries}
\crefname{equation}{Display}{Displays}
\crefname{ex}{Example}{Examples}
\newcommand{\be}[1]{\mathcal{P}_{#1}}
\newcommand{\subbe}[1]{\mathcal{Q}_{#1}}
\newcommand{\obe}[1]{P_{#1}}
\newcommand{\osubbe}[1]{Q_{#1}}
\newcommand{\tree}[1]{\mathcal{T}_{#1}}
\newcommand{\cantree}[1]{\mathcal{CT}_{#1}}
\newcommand{\beid}{\mathds{1}}
\DeclareMathOperator{\Id}{id}
\newcommand{\Set}{\mathbf{Set}}
\newcommand{\Cat}{\mathbf{Cat}}
\newcommand{\GSet}{G\mathbf{Set}}
\newcommand{\GCat}{G\mathbf{Cat}}
\newcommand{\GTop}{G\mathbf{Top}}
\newcommand{\NN}{\mathbb{N}}
\newcommand{\cC}{\mathcal{C}}
\newcommand{\nN}{\mathcal{N}}
\newcommand{\oO}{\mathcal{O}}
\newcommand{\pP}{\mathcal{P}}
\newcommand{\vV}{\mathcal{V}}
\newcommand{\wW}{\mathcal{W}}
\newcommand{\Si}{\Sigma}
\newcommand{\ga}{\gamma}
\newcommand{\si}{\sigma}
\newcommand{\ta}{\tau}
\newcommand{\cn}{\colon}
\newcommand{\rtarr}{\longrightarrow}
\newcommand{\chao}[1]{\widetilde{#1}}
\newcommand{\assoc}{\mathbf{Assoc}}
\newcommand{\cbe}{\pP}
\newcommand{\free}{\mathbb{F}}
\newcommand{\frN}{\mathfrak{O}}
\newcommand{\dia}{\diamondsuit}
\newcommand{\prim}{\operatorname{Prim}}
\newcommand{\Anglearrow}[1]{\rotatebox{#1}{$\Rightarrow$}}
\newcommand{\id}{\mathrm{id}}
\title{Biased permutative equivariant categories}
\author[Bangs]{Kayleigh Bangs}
\address{Department of Mathematics, Reed College, Portland, OR 97202}
\email{kabangs@reed.edu}
\author[Binegar]{Skye Binegar}
\address{School of Mathematics, Georgia Institute of Technology, Atlanta, GA 30332}
\email{skye@gatech.edu}
\author[Kim]{Young Kim}
\address{Department of Mathematics, Reed College, Portland, OR 97202}
\email{yokim@reed.edu}
\author[Ormsby]{Kyle Ormsby}
\address{Department of Mathematics, Reed College, Portland, OR 97202}
\email{ormsbyk@reed.edu}
\author[Osorno]{Ang\'{e}lica M. Osorno}
\address{Department of Mathematics, Reed College, Portland, OR 97202}
\email{aosorno@reed.edu}
\author[Tamas-Parris]{David Tamas-Parris}
\address{Department of Mathematics, Reed College, Portland, OR 97202}
\email{datamaspa@reed.edu}
\author[Xu]{Livia Xu}
\address{Department of Mathematics, Reed College, Portland, OR 97202}
\email{xinxu@reed.edu}
\begin{document}

\begin{abstract}
For a finite group $G$, we introduce the complete suboperad $\subbe G$ of the categorical $G$-Barratt--Eccles operad $\be G$.  We prove that $\be G$ is not finitely generated, but $\subbe G$ is finitely generated and is a genuine $E_\infty$ $G$-operad (i.e., it is $N_\infty$ and includes all norms).  For $G$ cyclic of order $2$ or $3$, we determine presentations of the object operad of $\subbe G$ and conclude with a discussion of algebras over $\subbe G$, which we call biased permutative equivariant categories.
\end{abstract}

\maketitle

\section*{Introduction}

The classifying space functor from categories to topological spaces allows the construction of spaces with desired structure from categories with similar, but usually easier to handle, structure. This is especially true for symmetric monoidal categories (categories with a binary operation that is unital, associative, and commutative up to coherent natural isomorphisms), which give rise to infinite loop spaces. This was proven independently by Segal \cite{segal} and May \cite{may72geo, May1974Einfty}, the latter using the theory of operads. 

The particular operad of interest in \cite{May1974Einfty} is the categorical Barratt-Eccles operad $\cbe$. Its algebras are \emph{unbiased} permutative categories. On the one hand, a (biased) permutative category is a symmetric monoidal category that is strictly associative and unital. Its structure is specified by a finite amount of information: the unit object (0-ary operation), the monoidal product (2-ary operation), and the symmetry (2-ary morphism). This structure is subject to a finite number of axioms. On the other hand, an unbiased permutative category, defined as an algebra over $\cbe$, is given by a collection of $n$-ary operations for all $n\geq 0$, that are compatible with each other in a way encoded by the operad. One can easily check that one obtains a biased permutative category from an unbiased one by restricting the structure. A harder result, that relies on the coherence theorem for symmetric monoidal categories \cite{maclane}, is that every biased permutative category gives rise to an unbiased one, thus giving a one-to-one correspondence between the two kinds of structure.

One perspective on this correspondence is that the operad given by the objects of $\mathcal{P}$, thought of as an operad in $\Set$, is finitely presented. More precisely, this operad is generated by a 0-ary operation (encoding the unit) and by a 2-ary operation (encoding the monoidal product), and all other operations can be obtained from these two using the symmetric group actions and the operad composition. As such, this is all the structure one needs to specify to give an algebra over $P = \operatorname{Ob} \cbe$. Moreover, all morphisms in $\mathcal{P}$ are generated by a single morphism between the two 2-ary operations. The coherence theorem in this setting says that a few specific relations on this morphism generate all the relations present in $\mathcal{P}$.

The operad $\cbe$ is constructed such that its classifying space is an $E_\infty$ operad in spaces, and thus, the classifying space of a permutative category is an $E_\infty$ space, and hence, an infinite loop space upon group completion.
In \cite{GM}, Guillou and May construct an equivariant analogue of the categorical Barratt-Eccles operad for a finite group $G$. This operad, $\be{G}$, has the property that its classifying space is a genuine $E_\infty$ $G$-operad, and thus, its algebras give rise to genuine equivariant infinite loop spaces. Because of this, Guillou and May define permutative $G$-categories as algebras over $\be{G}$.

Following \cite{GMMO}, one may ask if there is a biased definition of permutative $G$-categories, as there is for permutative categories. One of the main results of this paper, \cref{cor:PG-not-fin-gen} is that in the strictest sense, the answer is no for nontrivial groups $G$. Indeed, we prove that the object part of $\be{G}$ is \emph{not} finitely generated, meaning that one needs to specify infinitely many operations to give an algebra over it. 

Using the work of Rubin \cite{RubinRealization, RubinThesis}, we construct for each finite group $G$ a suboperad $\subbe{G}$ of $\be{G}$ that is still $E_\infty$, yet is finitely generated.  The key insight from Rubin, which is inspired by the work on $N_\infty$ operads of Blumberg and Hill \cite{BH}, is that the full suboperad generated by a collection of norms will be $E_\infty$, as long as one includes all the norms for orbits as generators.

Finally, in \cref{thm:pres-c2,thm:pres-c3} we give explicit presentations for the operads $\subbe{G}$ in the cases where $G=C_2$ and $G=C_3$. Although the statements of the proofs look very similar, the proofs that the relations given are sufficient are strikingly different. We use these results together with Rubin's coherence theorem for normed symmetric monoidal categories \cite{RubinThesis} to give a biased definition of $\subbe{G}$-algebras.

\subsection*{Organization}
In \cref{sec:prelim}, we recall necessary preliminary notions regarding permutations, operads in general, and the categorical $G$-Barratt--Eccles operad $\be G$ and its operad of objects $\obe G$.  In \cref{sec:gen}, we prove that $\obe G$ is not finitely generated for nontrivial $G$ (\cref{cor:PG-not-fin-gen}).  In \cref{sec:biased}, we introduce the finitely generated $E_\infty$ $G$-operads $\subbe G$ and determine presentations of the operads of objects when $G=C_2$ or $C_3$.  Finally, in \cref{sec:morphisms}, we define the notion of a biased permutative $G$-category for $G=C_2$ or $C_3$ and prove that these are in one-to-one correspondence with $\subbe G$-algebras.

\subsection*{Acknowledgements}
The authors express their deep gratitude to Jonathan Rubin, who very generously explained the results of \cite{RubinThesis} in detail.  This research was supported by NSF grant DMS-1709302.

\section{Preliminaries}\label{sec:prelim}

\subsection{Permutations}

Let $\Sigma_n$ be the symmetric group on $n$ letters. Throughout the paper we denote elements in $
\Sigma_n$ using cycle notation. For $\sigma \in \Sigma_n$, let $M_{\sigma}$ denote the permutation matrix representing $\sigma$, that is,
\[M_{\sigma} = 
\begin{pmatrix} 
e_{\sigma(1)} & \cdots & e_{\sigma(n)} 
\end{pmatrix}.\]

For $\sigma \in \Sigma_n$, $k_1, ..., k_n \geq 0$, let $k=k_1+\dots + k_n$, and think of $\{k_1,\dots,k_n\}$ as a partition of $\{1,\dots,k\}$ into $n$ (possibly empty) blocks. We define the \emph{block permutation} $\sigma \langle k_1, \dots, k_n \rangle$ to be the permutation in $\Sigma_{k}$ that permutes the $k$ blocks according to $\sigma$. For example, if $\sigma = (1\ 2\ 3) \in \Sigma_3$, then $M_{\si \langle k_1, k_2, k_3 \rangle} $ is the block matrix
\[
\begin{pmatrix} 
0 & 0 & I_{k_3} \\
I_{k_1} & 0 & 0 \\
0 & I_{k_2} & 0
\end{pmatrix},\]
where $I_n$ denotes the $n\times n$ identity matrix. 

Let $\tau_j \in \Sigma_{k_j}$ for $j = 1, \dots, n$. Define the \emph{block sum} $\tau_1 \oplus ... \oplus \tau_n \in \Sigma_{k}$ to be the permutation that permutes via $\tau_j$ within the $j$-th block. Using permutation matrices, we have
\[M_{\tau_1 \oplus \dots \oplus \tau_n} = 
\begin{pmatrix} 
M_{\tau_1} & 0 & \cdots & 0 \\
0 & M_{\tau_2} & \cdots & 0 \\
\vdots & \vdots & \ddots & \vdots \\
0 & 0 & \cdots & M_{\tau_n} 
\end{pmatrix}.\]

We may combine these two constructions to define the \emph{permuted block sum} $\sigma \langle \tau_1,\ldots \tau_n \rangle$ as
\[
  \sigma\langle \tau_1,\ldots,\tau_n\rangle = \sigma\langle k_1,\ldots,k_n\rangle \cdot (\tau_1\oplus\cdots\oplus \tau_n)
\]
where $\sigma\in \Sigma_n$ and $\tau_i\in \Sigma_{k_i}$ for $i=1,\ldots,n$.  For instance,
\[
  M_{(1\ 2\ 3)\langle \tau_1,\tau_2,\tau_3\rangle} =
  \begin{pmatrix} 
0 & 0 & M_{\tau_3} \\
M_{\tau_1} & 0 & 0 \\
0 & M_{\tau_2} & 0
\end{pmatrix}.
\]

Finally, we take this opportunity to define two special classes of permutations which we will need to reference in our subsequent work.

\begin{defn}\label{defn:simple}
A permutation $\sigma \in \Sigma_n$ is \emph{simple} if, for any $k,k'\in \{1,\dots,n\}$ and $m$ such that $0<m<n-1$ and $m\leq n - \max\{k,k'\}$, $\sigma$ does not map $\{k, k+1, \dots, k+m\}$ to $\{k', k'+1,\dots, k'+m\}$. That is, $\sigma$ does not map any nontrivial proper interval to another nontrivial proper interval.  We call a permutation \emph{nonsimple} if it is not simple.
\end{defn}

\begin{ex}
The permutation $(2\ 3)\in \Sigma_3$ is nonsimple since it takes $\{2,3\}$ to itself.  The permutation $(1\ 2\ 4\ 3)\in \Sigma_4$ is simple.  Asymptotically, the fraction of simple permutations in $\Sigma_n$ is $\frac{1}{e^2}(1-\frac{4}{n})$ \cite{albert}.
\end{ex}

\begin{rem}
It is perhaps easiest to recognize a nonsimple permutation via its permutation matrix, which necessarily has a block decomposition with one block of size strictly between $1$ and $n$.
\end{rem}

We also need the notion of a separable permutation.

\begin{defn}\label{defn:separable}
The \emph{skew sum} of permutations $\sigma$ and $\tau$ is $(1\ 2)\langle \sigma,\tau\rangle$.  A permutation is \emph{separable} if it can be obtained from the trivial permutation $1_{\Sigma_1}$ by a finite number of block and skew sums.
\end{defn}

\begin{ex}
The permutation with matrix
\[
\begin{tikzpicture}
  \matrix(m)[matrix of math nodes,left delimiter=(,right delimiter=)]
    { &&&1&&&&\\
      &&1&&&&&\\
      &&&&1&&&\\
      1&&&&&&&\\
      &1&&&&&&\\
      &&&&&&1&\\
      &&&&&&&1\\
      &&&&&1&&\\
    };
\end{tikzpicture}
\]
is separable.
\end{ex}

\subsection{Operads}
The purpose of an operad is to encode families of operations.  We now provide a brief introduction to operads and their algebras here and set notation.

\begin{defn}\label{defn:operad}
  Let $\vV$ be a cartesian monoidal category. An
  \emph{operad} $\oO$ in $\vV$ consists of a sequence $\{\oO(n)\}_{n\geq 0}$ of
  objects in $\vV$ such that $\oO(n)$ has a right $\Si_n$-action,
  together with morphisms
  \[
  \ga \cn \oO(n) \times \oO(k_1) \times \cdots \times \oO(k_n) \rtarr 
  \oO(k_1 + \cdots +k_n)
  \]
  and 
  \[
  \beid \cn \ast \rtarr \oO(1),
  \]
  satisfying associativity, unitality and equivariance axioms. 
  See \cite{may72geo} or \cite{Yau2016Colored} for a complete description.  
  \end{defn}
  
  In this paper we will concentrate on operads in $\Set$, $\Cat$, $\GSet$ and $\GCat$, where $G$ is a finite group. Note that in these cases we can think of $\beid$ as a ($G$-fixed) element, respectively object, in the ($G$-)set, respectively ($G$-)category, $\oO(1)$. If $f\in \oO(n)$ we say $f$ has \emph{arity} $n$ and write $|f|=n$. In the case of $\GSet$ and $\GCat$, we often think of $\oO(n)$ as a left $G\times \Si_n$-object via $(g,\si)\cdot f = g\cdot f \cdot \si^{-1}$.
  
  Elements of $\oO(n)$ should be thought of as operations with $n$ inputs and $1$ output, so as such, they will be depicted as trees, with $\ga$ depicted as grafting. For example, if $f\in \oO(2)$, $g_1\in \oO(3)$, and $g_2\in \oO(1)$, we depict $\ga(f;g_1,g_2)\in \oO(4)$ as
  \begin{center}
\begin{tikzpicture}[scale=0.75]
\begin{scope}[xshift=9pt,yshift=-5in,grow'=up,
frontier/.style={distance from root=150pt}]

	\Tree [.$f.$ [.$g_1$ [.$\,$ ] [.$\,$ ] [.$\,$ ] ] [.$g_2$ [.$\,$ ]] ]

\end{scope}
\end{tikzpicture}

\end{center}

Associativity of $\ga$ can then be interpreted as saying that the grafting of three levels can be done in any order, yielding the same result. Thus the tree
  \begin{center}
\begin{tikzpicture}[scale=0.75]
\begin{scope}[xshift=9pt,yshift=-5in,grow'=up,
frontier/.style={distance from root=150pt}]

	\Tree [.$f$ [.$g_1$ [.$h_{11}$ [.$\,$ ] [.$\,$ ] ] [.$h_{12}$ [.$\,$ ] ] [.$h_{13}$ [.$\,$ ] ] ] [.$g_2$ [.$h_{21}$ [.$\,$ ] [.$\,$ ] [.$\,$ ] ] ] ]

\end{scope}
\end{tikzpicture}

\end{center}
has a unique interpretation as 
\[\ga(f;\ga(g_1;h_{11},h_{12},h_{13}),\ga(g_2;h_{21}))=\ga(\ga(f;g_1,g_2);h_{11},h_{12},h_{13},h_{21}).\]

For $m\geq 1$, $n\geq 0$ and $1\leq i \leq m$, we define the $i$th \emph{partial composition}
\[\circ _i\cn \oO(m) \times \oO(n) \rtarr \oO(m+n-1)\]
as the composite
\[ \oO(m) \times \oO(n) \to \oO(m) \times \oO(1)^{i-1} \times \oO(n) \times \oO(1)^{m-i} \xrightarrow{\ga} \oO(m+n-1), \]
where the first arrow is induced by the map $\beid\cn \ast \to \oO(1)$.
In terms of elements, the $i$th partial composition is given by
\[f\circ_i g =\ga (f; \beid, \dots, \beid, g, \beid, \dots, \beid),\]
where $g$ is in the $i$th position of the $m$-tuple. It should be thought of as grafting $g$ onto the $i$th leaf of $f$ and prolonging the rest of the leaves appropriately.
    
  \begin{defn}\label{defn:O-algebra}
  An \emph{$\oO$-algebra} in $\vV$ is given by a pair $(X,\mu)$, where $X$ is an
  object of $\vV$, and $\mu$ is a collection of morphisms
  \[
  \mu_n \cn \oO(n) \times X^{n} \rtarr X
  \]
  in $\vV$ satisfying equivariance conditions and compatibility with
  $\ga$ and $\beid$. See \cite[\S13.2]{Yau2016Colored} for a complete list of axioms.
  \end{defn}

\begin{rem}\label{rem:2cats}
For a given operad $\oO$, there is a notion of maps between $\oO$-algebras which we do not describe here as it is not the focus of this paper \cite[Definition 13.2.8]{Yau2016Colored}. Moreover, if $\oO$ is an operad in $G\Cat$, there are two other notions of maps: lax and pseudo, which satisfy compatibilities up to natural transformations and natural isomorphisms, respectively (see \cite[Definition 2.21]{RubinThesis}). In this case, the 2-categorical nature of $G\Cat$ also implies that there is a notion of transformation between algebra maps \cite[Definition 2.27]{RubinThesis}. We thus have three relevant 2-categories with objects given by $\oO$-algebras, with 1-morphisms given respectively by strict, pseudo, and lax maps. In all three cases the 2-morphisms are given by these transformations.
\end{rem}

 \begin{defn}\label{defn:map_operad}
 Let $\nN$ and $\oO$ be operads in $\vV$. A \emph{map of operads} $f\colon \nN \rightarrow \oO$ consists of a $\Sigma_n$-equivariant morphism $f_n \colon  \nN(n) \rightarrow \oO(n)$ for all $n\geq 0$, such that they respect the unit and the operadic composition.
 
 When $\vV=G\Cat$, we say $f$ is an \emph{equivalence} if the map of fixed points \[f_n^\Gamma \colon \nN(n)^\Gamma \rightarrow \oO(n)^\Gamma\] is a weak equivalence on passage to classifying spaces for all $\Gamma \le G \times \Sigma_n$.
 \end{defn}

We note that if $F\colon \vV \to \wW$ is a product-preserving functor and $\oO$ is an operad in $\vV$, then $F(\oO)$ will form an operad in $\wW$ with all the structure induced from that of $\oO$ (see \cite[Theorem 11.5.1]{Yau2016Colored}  for a more general version of this result). Most of the operads used in this paper will be constructed this way from the following example.

\begin{ex}
The \emph{associativity operad} in $\Set$ is given by the sequence $\assoc(n)=\Si_n$, with (right) $\Si_n$-action given by right multiplication. The composition 
\[ \ga\cn \Si_n \times \Si_{k_1} \times \dots \times \Si_{k_n} \rtarr \Si_{k_1+ \dots + k_n}\]
is given by
\[\ga(\si; \ta_1,\dots,\ta_k)=\si\langle \ta_1,\dots,\ta_n\rangle.\]
The identity $\beid$ is given by $1_{\Si_1}\in \Si_1$.
Algebras over $P$ in $\Set$  are (unbiased) associative and unital monoids.
\end{ex}

\subsection{The categorical Barratt-Eccles operad and its equivariant analogue}
The categorical Barratt-Eccles operad plays an important role in the theory of infinite loop spaces. To construct it, we first recall the chaotic category functor $\chao{(-)} \cn \Set \to \Cat$. 

\begin{defn}
 Given a set $X$, we denote by $\chao{X}$ the category with objects given by $X$ and a unique morphism between any two objects. It is called the \emph{chaotic category on $X$}. 
\end{defn}

The construction above extends to a functor $\chao{(-)}\cn \Set \to \Cat$ that is right adjoint to object functor $\Cat \to \Set$. As a right adjoint, $\chao{(-)}$ preserves products and hence, sends operads in $\Set$ to operads in $\Cat$.

\begin{defn}
The \emph{categorical Barratt-Eccles operad} $\cbe$ is the operad in $\Cat$ defined as $\chao{\assoc}$. In particular, $\cbe(n)=\chao{\Si_n}$.
\end{defn}

We recall the definition of a permutative category.

\begin{defn}\label{dfn:permcat}
 A \emph{permutative category} consists of 
 \begin{itemize}
  \item a category $\cC$;
  \item an object $e\in \cC$;
  \item a functor $\otimes \colon \cC \times \cC \to \cC$;
  \item a natural isomorphism 
  \[
    \begin{tikzpicture}[x=1mm,y=1mm]
    \draw[tikzob,mm] 
    (0,0) node (00) {\cC \times \cC}
    (25,0) node (10) {\cC \times \cC}
    (12.5,-10) node (01) {\cC}
    ;
    \path[tikzar,mm] 
    (00) edge node {\tau} (10)
    (10) edge node {\otimes} (01)
    (00) edge[swap] node {\otimes} (01)
    ;
    \draw[tikzob,mm]
    (12.5,-4) node {\Anglearrow{40} \beta}
    ;
  \end{tikzpicture}
  \]
   called the \emph{symmetry}, whose components are given by morphisms $\beta_{a,b}\colon a \otimes b \to b \otimes a$ in $\cC$.
    \end{itemize}
 The data above are subject to the following axioms
 \begin{enumerate}[(i)]
  \item $e$ is a strict two-sided unit for $\otimes$ that is, for all $a\in \cC$,
  \[e\otimes a = a = a \otimes e ;\]
  \item $\otimes$ is strictly associative: for all $a,b,c\in \cC$,
  \[a\otimes(b\otimes c) = (a \otimes b)\otimes c ;\]
  \item for all $a,b,c\in \cC$, the following diagrams commute
  \[  \begin{tikzpicture}[x=30mm,y=15mm]
    \draw[tikzob,mm] 
    (0,-1) node (0) {a\otimes e}
    (1,-1) node (1) {e\otimes a}
    (.5,0) node (2) {a};
    \path[tikzar,mm] 
    (0) edge[swap] node {\beta_{a,e}} (1)
    (0) edge[/tikz/commutative diagrams/equal] (2)
    (2) edge[/tikz/commutative diagrams/equal] (1);
  \end{tikzpicture}
  \qquad
  \begin{tikzpicture}[x=30mm,y=15mm]
    \draw[tikzob,mm] 
    (0,-1) node (0) {a\otimes b}
    (1,-1) node (1) {a\otimes b}
    (.5,0) node (2) {b\otimes a};
    \path[tikzar,mm] 
    (0) edge[swap] node {\id} (1)
    (0) edge node {\beta_{a,b}} (2)
    (2) edge node {\beta_{b,a}} (1);
  \end{tikzpicture}\]
\[
    \begin{tikzpicture}[x=30mm,y=15mm]
    \draw[tikzob,mm] 
    (0,-1) node (0) {a\otimes b \otimes c}
    (1,-1) node (1) {c\otimes a\otimes b}
    (.5,0) node (2) {a\otimes c\otimes b};
    \path[tikzar,mm] 
    (0) edge[swap] node {\beta_{a\otimes b,c}} (1)
    (0) edge node {\id \otimes \beta_{b,c}} (2)
    (2) edge node {\beta_{a,c}\otimes \id} (1);
  \end{tikzpicture}
  \]
 \end{enumerate}
\end{defn}

As noted in the introduction, algebras over $\cbe$ are in one-to-one correspondence with permutative categories \cite{May1974Einfty} with $e$ and $\otimes$ represented by $1_{\Si_0}$ and $1_{\Si_2}$, respectively, and $\beta$ represented by the unique morphism in $\cbe(2)$ from $1_{\Si_2}$ to $(1\ 2)$.

For a finite group $G$, we also define the categorical $G$-equivariant Barratt-Eccles operad.  We use the functor $\Set(G,-)\cn \Set \to \GSet$ that takes a set $X$ to the set $\Set(G,X)$ of all functions from $G$ to $X$ with left $G$-action given as follows. For $g\in G$ and $f\in \Set(G,X)$, the function $g\cdot f$ sends $h\in G$ to $f(hg)$. This is a product-preserving functor, and as such we can use it to transfer operads from $\Set$ to $\GSet$.

\begin{defn}\label{P_G:defn}
 The operad $\obe{G}$ in $\GSet$ is defined as $\Set(G,\assoc)$. In particular, an element in $\obe{G}(n)$ is a function (not necessarily a group homomorphism) $f\cn G \to \Si_n$. The \emph{categorical $G$-equivariant Barratt-Eccles operad} $\be{G}$ is the operad in $\GCat$ defined as $\chao{\obe{G}}$. Algebras over $\be{G}$ are called \emph{permutative $G$-categories.} 
\end{defn}

\begin{rem}
 A standard calculation shows that the operad $\be{G}$ can be alternatively defined as the hom category $\Cat(\chao{G},\chao{\assoc})$. As noted in \cite[Example 3.8]{RubinRealization}, the operad $\be{G}$ is isomorphic but not equal to the one defined in \cite{GM}, the main difference being that the $G$-actions are slightly different. There the authors prove that upon geometric realization, one obtains an $E_\infty$ $G$-operad in $\GTop$. 
\end{rem}

\subsection{Presentations for operads in $G\Set$}

We conclude this section by recalling how presentations of operads work. The basic characters are free operads (in $G$-sets) and quotients.  In \cite[Construction 7.6]{RubinRealization}, Rubin presents a model for the free symmetric $G$-operad on a sequence of $G$-sets.  Our starting point is a sequence of sets, from which we build the sequence of free $G$-sets and then apply Rubin's construction.  This significantly simplifies the construction, as indicated in \cite[Proposition 8.2]{RubinRealization}.

\begin{defn}\label{defn:free}
Let $S = \{S_n\}$ be a sequence of sets and let $G\times S= \{G\times S_n\}$ denote the induced sequence of free $G$-sets.  The \emph{free operad} in $G$-sets on $S$, denoted $\free{S}$, is $F_0(G\times S)$ in the notation of \cite[\S7]{RubinRealization}.
\end{defn}

We may interpret the elements of $\free{S}(n)$ as isomorphism classes of finite rooted planar trees with $n$ leaves and $k$-ary nodes labeled by elements of $G\times S_k$; the entire $(G\times S)$-labeled tree is then further labeled by an element of $\Sigma_n$.  The $\Sigma$-action is the obvious one, operadic composition is given by grafting trees, and the $G$-action simply multiplies the $G$-label of each node.

We now move on to quotients of free operads, following \cite[\S6]{RubinRealization}.

\begin{defn}\label{defn:reln}
Let $\oO$ be an operad in $G$-sets.  A \emph{congruence relation} on $\oO$ is a graded equivalence relation $\sim ~= (\sim_n)_{n\ge 0}$ which respects the $G\times \Sigma$-action and operadic composition.  If $R = (R_n)_{n\ge 0}$ is a graded binary relation on $\oO$, then the smallest congruence relation containing $\oO$, denoted $\langle R\rangle$, is the \emph{congruence relation generated by $R$}.
\end{defn}

\begin{rem}
Congruence relations are closed under intersection, and $R=(\oO(n))_{n\geq 0}$ constitutes a congruence relation, so we may construct $\langle R\rangle$ by taking the intersection of all congruence relations containing $R$.
\end{rem}

We can form quotients of operads in $G\Set$ by congruence relations satisfying the expected universal property; see \cite[Proposition 6.5]{RubinRealization}.  To be specific, given an operad $\oO$ and a congruence relation $\sim$, there is a $G$-operad $\oO/\sim$ and operad map $\oO\to \oO/\sim$ such that any other operad map $\oO\to \oO'$ which respects $\sim$ factors uniquely through $\oO/\sim$.

Finally, we note the following proposition which will be important when we pass from operads in $G$-sets to operads in $G$-categories via the chaotic functor.

\begin{prop}[{\cite[Proposition 4.12]{RubinThesis}}]\label{prop:chaoticQuotient}
Suppose that $\oO$ is an operad in $G$-sets, $R$ is a binary relation on $\oO$, and $f:\chao{\oO}\to \nN$ is a map of operads in $G$-categories.  Then $f$ factors through the quotient map $\chao{\oO}\to \chao{\oO}/\langle R\rangle = \chao{\oO/\langle R\rangle}$ if and only if $f$ respects $R$ on objects and $f(x\to y) = \Id:f(x)\to f(y)$ whenever $xRy$.  In such a case, the induced map $\chao{\oO}/\langle R\rangle\to \nN$ is unique.
\end{prop}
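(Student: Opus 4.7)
The plan is to reduce the statement to the universal property of the quotient of an operad in $G\Set$ by a congruence relation, combined with the fact that the chaotic category has a unique morphism between any two objects. I would first verify the claimed equality $\chao{\oO}/\langle R\rangle = \chao{\oO/\langle R\rangle}$: a congruence relation on the $G\Cat$-operad $\chao{\oO}$ generated by a binary relation on objects is determined by its object component, and the chaotic functor turns the object quotient $\oO/\langle R\rangle$ into the chaotic category on that quotient.

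The forward direction is straightforward. If $f$ factors as $\chao{\oO}\to \chao{\oO/\langle R\rangle}\to \nN$, then $xRy$ implies $[x]=[y]$ in the quotient, so $f(x)=f(y)$, and the unique morphism $x\to y$ in $\chao{\oO}$ is sent first to the identity $[x]\to [x]$ in $\chao{\oO/\langle R\rangle}$ and then to $\Id_{f(x)}$ in $\nN$.

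The converse is the main content. I would define
\[
  R' = \{(x,y)\in \oO\times \oO : f(x)=f(y) \text{ and } f(x\to y)=\Id\}
\]
and show that $R'$ is a congruence relation containing $R$, so that $\langle R\rangle \subseteq R'$. Reflexivity is immediate from functoriality. Symmetry and transitivity use that the unique morphisms in $\chao{\oO}$ are isomorphisms, so that $f(y\to x) = f(x\to y)^{-1} = \Id$ and $f(x\to z) = f(y\to z)\circ f(x\to y) = \Id\circ\Id$. Equivariance is automatic from $f$ being a $G\times \Sigma$-equivariant functor. The most delicate step, which I expect to be the main obstacle, is closure under operadic composition: given $(x_i,x_i')\in R'$ for $i=0,1,\ldots,n$, one must use the fact that $f$ preserves $\gamma$ on \emph{morphisms} in $G\Cat$, yielding
\[
  f\bigl(\ga(x_0;x_1,\ldots,x_n)\to \ga(x_0';x_1',\ldots,x_n')\bigr) = \ga(\Id;\Id,\ldots,\Id) = \Id.
\]

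Once $\langle R\rangle\subseteq R'$ is established, I would construct $\bar f\cn \chao{\oO/\langle R\rangle}\to \nN$ as follows. On objects, $\bar f[x] = f(x)$ is well-defined by $\langle R\rangle \subseteq R'$, and it is an operad map of $G$-sets by the universal property of \cite[Proposition 6.5]{RubinRealization}. On morphisms, send the unique arrow $[x]\to [y]$ to $f(x\to y)$; independence of representatives follows by factoring $x'\to y'$ in $\chao{\oO}$ as $x'\to x\to y\to y'$ and noting that the outer legs map to identities. Functoriality, $G$-equivariance, and compatibility with $\ga$ are then inherited directly from $f$. Uniqueness of $\bar f$ is immediate, because the quotient functor is essentially surjective on objects and because $\chao{\oO/\langle R\rangle}$ admits a unique morphism between any two objects.
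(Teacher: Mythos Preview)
The paper does not give its own proof of this proposition; it is simply quoted from \cite[Proposition~4.12]{RubinThesis}. So there is no in-paper argument to compare against.

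That said, your argument is correct and is the natural one. The key idea---defining $R'$ to be the set of pairs $(x,y)$ with $f(x)=f(y)$ and $f(x\to y)=\Id$, and then checking that $R'$ is a congruence relation so that $\langle R\rangle\subseteq R'$---is exactly what one expects. Your verification of closure under $\gamma$ is the only step with any content, and you have it right: because $f$ is a map of operads in $G\Cat$, it commutes with $\gamma$ on morphisms, and $\gamma$ applied to identities yields an identity. The construction of $\bar f$ on morphisms and the well-definedness check via the factorization $x'\to x\to y\to y'$ are also fine. Uniqueness is immediate for the reason you give.
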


See \cite[Proposition 4.13]{RubinThesis} for an enhancement of this result to the $2$-category of algebras and lax maps.

\section{The $G$-Barratt-Eccles operad is not finitely generated}\label{sec:gen}
Recall the operad $\obe{G}=\Set(G,\assoc)$ from \cref{P_G:defn}. In this section we prove that $\obe{G}$ is not finitely generated for $|G|>1$. To do so, we introduce the notion of the suboperad generated by a sequence of subsets.

Recall that $\mathcal{N}\subseteq \mathcal{O}$ is a suboperad if $\mathcal{N}(n)\subseteq \mathcal{O}(n)$ is a $\Si_n$-subset for all $n$, $\beid\in \mathcal{N}(1)$, and $\mathcal{N}$ is closed under the operadic composition for $\mathcal{O}$.

\begin{defn}\label{defn:gen}
For an operad $\mathcal{O}$ in $\GSet$ and a sequence of subsets $S=\{S_i : S_i\subseteq \mathcal{O}(i)\}$, the suboperad generated by $S$, denoted $\langle S\rangle$ is the smallest suboperad of $\mathcal{O}$ such that $S_i\subseteq \langle S\rangle(i)$ for all $i$.  The operad $\oO$ is called \emph{finitely generated} if there exists such an $S$ with $\left|\coprod_{i\in \NN}S_i\right|<\infty$ and $\langle S\rangle = \oO$.
\end{defn}

\begin{rem}\label{rem:below}
The definition permits $S_i$ to be empty, and it is necessary that $S_i=\varnothing$ for sufficiently large $i$ in order for $S$ to witness finite generation of $\oO$.
\end{rem}

The reader may check that we may explicitly model $\langle S\rangle$ in the following fashion.

\begin{prop}\label{prop:genModel}
If $\mathcal{O}$ is an operad in $G$-sets and $S=\{S_i : S_i\subseteq \mathcal{O}(i)\}$, then $\langle S\rangle(k)$ is the set of $\Sigma_k$-actions on operadic compositions of $G$-actions on elements of $S$, \emph{i.e.},
\[
  \langle S\rangle(k) = \left\{((g_0s_0)\circ_{i_1} (g_1s_1) \circ_{i_2} \cdots \circ_{i_m} (g_ms_m))\cdot \sigma
  ~\middle|~
  \begin{array}{c}
  m\in \NN, g_i \in G, s_i\in S_{k_i},\\
  \sum k_i = k+m, \sigma\in \Sigma_k
  \end{array}
  \right\}.
\]
\end{prop}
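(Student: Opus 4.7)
Denote the right-hand side by $T(k)$. The strategy is to show that $T$ is itself a sub-$G$-operad of $\oO$ containing $S$. Since $\langle S\rangle$ is by definition the smallest such, this forces $\langle S\rangle \subseteq T$. The reverse inclusion is immediate, as any suboperad of $\oO$ that contains $S$ is automatically closed under the $G$-action, the $\Sigma_k$-action, and operadic composition, so it must contain every element of the stated form.

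The real work is checking that $T$ is a sub-$G$-operad. The inclusion $S_i\subseteq T(i)$ is the case $m=0$, $g_0=e$, $\sigma=\id$. Closure under the $\Sigma_k$-action is evident: right-multiplying by $\tau\in\Sigma_k$ only replaces $\sigma$ by $\sigma\tau$. Closure under the $G$-action uses that in any operad in $G\Set$, composition is $G$-equivariant for the diagonal $G$-action, so
\[g\cdot \bigl((g_0 s_0)\circ_{i_1}\cdots\circ_{i_m}(g_m s_m)\bigr)\cdot\sigma = \bigl(((gg_0)s_0)\circ_{i_1}\cdots\circ_{i_m}((gg_m)s_m)\bigr)\cdot\sigma,\]
which is again in the required form. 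The unit $\beid\in T(1)$ either arises from the empty-composition convention, or is adjoined by hand.

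The main step is closure under operadic composition. Since $\gamma$ decomposes into iterated partial compositions, it suffices to check that $f\circ_i f'$ lies in $T$ whenever $f = (X_0\circ_{a_1}\cdots\circ_{a_m} X_m)\cdot\sigma$ and $f' = (Y_0\circ_{b_1}\cdots\circ_{b_\ell} Y_\ell)\cdot\tau$ are elements of $T$, with each $X_p,Y_q$ a $G$-translate of an element of $S$. The two equivariance axioms for $\gamma$ specialize to partial-composition identities of the shape
\[(u\cdot\sigma)\circ_i f' = (u\circ_{\sigma^{-1}(i)} f')\cdot \sigma',\qquad u\circ_i(v\cdot\tau) = (u\circ_i v)\cdot(\id\oplus\cdots\oplus\tau\oplus\cdots\oplus\id),\]
so both $\sigma$ and $\tau$ can be pushed past the new partial composition to the far right. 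Associativity of $\gamma$ then flattens the resulting nested expression into a single chain of partial compositions of the $X_p$'s and $Y_q$'s in some order, producing an element of $T(k_f+k_{f'}-1)$ as required.

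The main obstacle is purely notational: the explicit bookkeeping of how the $\circ$-indices and block permutations shift through each application of equivariance and associativity. No coherence beyond the bare operad axioms is invoked, so once the notation is organized, the verification is essentially mechanical.
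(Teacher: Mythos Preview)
The paper gives no proof of this proposition; it simply says ``The reader may check that we may explicitly model $\langle S\rangle$ in the following fashion.'' Your argument is precisely the routine verification the paper is leaving to the reader: show the displayed set is a sub-$G$-operad containing $S$ (hence contains $\langle S\rangle$), and note the reverse inclusion is automatic---so your approach is correct and matches what the paper intends.
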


\begin{rem}\label{rem:surj}
 Note that taking the elements of $S$ as abstract symbols, one can construct a surjective map of operads $\free S \to \langle S \rangle $. Thus by \cite[Corollary 6.7]{RubinRealization}, one can construct the latter as a quotient of the former by the kernel of this map.
\end{rem}

We now introduce two further notions of generation that will be important in our proof that $P_G$ is not finitely generated.

\begin{defn}
An element $f \in \obe{G}(n)$ is \emph{$\gamma$-generated from below}  if there exist  
$s, h_1, \dots, h_{|s|}$ $\in \bigcup_{i=0}^{n-1} \obe{G}(i)$ such that
\[
  f=\gamma(s; h_1, \dots, h_{|s|}).
\]
An element $f' \in \obe{G}(n)$ is \emph{generated from below} if it is of the form $f\cdot \sigma$ for $f$ $\gamma$-generated from below and $\sigma\in \Sigma_n$.
\end{defn} 

\begin{rem}\label{rem:prim-unique}
The $G$-equivariance axiom on $\ga$ guarantees that the set of elements of arity $n$ that are generated from below is closed under the $G$-action.
\end{rem}

We now consider how the notions of $\gamma$-generation from below and generation from below interact with a special class of elements of $\obe{G}$, the primitive ones:

\begin{defn}
Call $f\in \obe{G}(n)$ \emph{primitive} if $f(1_G)=1_{\Sigma_n}$. If $f$ is not primitive, we call it \emph{nonprimitive}.
\end{defn}

\begin{rem}
For each $f\in \obe{G}(n)$, the permutation $f(1_G)^{-1}$ is the unique $\sigma\in \Sigma_n$ such that $f\cdot \sigma$ is primitive.
\end{rem}

\begin{lemma}\label{lem:gen-prim}
Suppose $f$ is $\gamma$-generated from below with $f=\gamma(s; h_1, \dots, h_{|s|})$, $|s|;|h_1|,\ldots, |h_{|s|}|<n$. Then $f$ is primitive if and only if it can be written as $f=\gamma(s'; h_1', \dots, h_{|s'|}')$, where $s',h_1', \dots, h_{s'}$ are all primitive elements of arity greater than 0 and less than $n$.
 \end{lemma}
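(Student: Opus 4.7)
The backward direction is immediate: when $s'$ and every $h_j'$ are primitive, evaluating the operadic composition at $1_G\in G$ gives
\[
  f(1_G) = s'(1_G)\langle h_1'(1_G),\ldots,h_{|s'|}'(1_G)\rangle = 1_{\Si_{|s'|}}\langle 1,\ldots,1\rangle = 1_{\Si_n},
\]
so $f$ is primitive.

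For the forward direction, the plan is to first reduce to the situation where every $h_i$ has positive arity. Let $I := \{i \mid |h_i|=0\}$ and set
\[
  s^* := \ga(s; \epsilon_1,\ldots,\epsilon_{|s|}), \qquad \epsilon_i = \begin{cases} h_i & \text{if } i \in I, \\ \beid & \text{if } i\notin I. \end{cases}
\]
Operadic associativity together with the unit identity $\ga(\beid;-)=\Id$ yields $f = \ga(s^*; (h_j)_{j\notin I})$, performing the reduction in a single step and sidestepping the bookkeeping of iterated partial compositions. Since $|s|<n$ we have $|s^*| = |s|-|I|<n$; since $\sum|h_i|=n\geq 1$ not every $h_i$ can be empty, so $|s^*|\geq 1$; and $|s^*|=1$ would force the surviving $h_j$ to have arity $n$, contradicting $|h_j|<n$. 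Hence $|s^*|\geq 2$, every surviving $h_j^* := h_j$ has $k_j := |h_j^*|\geq 1$, and the rewritten decomposition is still from below.

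Next I would use that $f$ is primitive. Evaluating at $1_G$,
\[
  1_{\Si_n} = s^*(1_G)\langle k_1,\ldots,k_{|s^*|}\rangle \cdot \bigl(h_1^*(1_G)\oplus\cdots\oplus h_{|s^*|}^*(1_G)\bigr),
\]
which displays a block permutation on the left times a block sum on the right equal to the identity. So the block permutation $s^*(1_G)\langle k_1,\ldots,k_{|s^*|}\rangle$ is itself a block sum. The key combinatorial claim is that when all $k_i\geq 1$, a block permutation $\ta\langle k_1,\ldots,k_m\rangle$ is block-diagonal only when $\ta = 1_{\Si_m}$, because any nontrivial $\ta$ with $\ta(i)=j\neq i$ places an identity block $I_{k_i}$ in the off-diagonal position $(j,i)$ of the permutation matrix. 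Applying this, $s^*(1_G) = 1$, which in turn forces each $h_j^*(1_G) = 1$, and setting $s' := s^*$ and $h_j' := h_j^*$ furnishes a primitive decomposition with all arities in $[1,n-1]$.

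The main obstacle is this last combinatorial claim; I would prove it directly from the conventions of the preliminaries by inspecting $M_{\ta\langle k_1,\ldots,k_m\rangle}$ block by block. A secondary bookkeeping point is checking that the single-step rewrite defining $s^*$ reproduces the original composition, which reduces to the associativity axiom $\ga(\ga(s;\epsilon_1,\ldots,\epsilon_{|s|}); g_1,\ldots) = \ga(s; \ga(\epsilon_1; \cdots),\ldots)$ together with $\ga(\beid;g) = g$ and $\ga(h_i; ) = h_i$ for the arity-zero factors.
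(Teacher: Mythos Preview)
Your proof is correct and follows essentially the same approach as the paper: both absorb the arity-zero factors into $s$ to obtain $s^* = \gamma(s;\epsilon_1,\ldots,\epsilon_{|s|})$ (the paper's $s'$), and then argue that primitivity of $f$ forces $s^*(1_G)$ and all surviving $h_j^*(1_G)$ to be identities. Your write-up is somewhat more explicit---you verify the arity bounds $0<|s^*|<n$ and isolate the combinatorial fact that $\tau\langle k_1,\ldots,k_m\rangle$ with all $k_i\ge 1$ can be a block sum only when $\tau=1$---whereas the paper states the equivalent conclusion in a single sentence and deduces $h_i'(1_G)=1$ before $s'(1_G)=1$, but these are cosmetic differences rather than a different method.
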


\begin{proof}
The reverse direction is trivial.

For the forward direction, suppose  $f=\gamma(s; h_1, h_2,\dots, h_{|s|})$. Let $h_1',h_2',\dots$ be the terms of $h_1,\dots,h_{|s|}$ not equal to $e$ in ascending order, and let
\[s'=\gamma(s;t_1,\dots,t_{|s|}) \quad \text{where} \quad t_j=
\begin{cases}
\beid & \text{if } h_j\neq e\\
e & \text{if } h_j=e.
\end{cases}\]
Thus by associativity and unitality of $\gamma$, we have that $f=\gamma(s'; h_1', h_2',.. h_{|s'|}')$ and hence, 
\[1_{\Sigma_{|f|}}=f(1_G)=s'(1_G) \langle h_1'(1_G), \dots,  h_{|s'|}'(1_G)\rangle.\]
Note that if $h_i(1_G)$ is not the identity for some $i$, then the expression on the right hand side cannot be the identity. Since all of the $h_i$ are of arity at least 1, $s'(1_G)$ must also be the identity, as desired.
\end{proof}

\begin{lemma}\label{lemma:prim}
If $f\in \obe{G}(n)$ is primitive and generated from below (but not necessarily $\gamma$-generated from below), then $f$ can be written in the form $f=\gamma(s; h_1, \dots, h_{|s|})$ where $s, h_1, \dots, h_{|s|}$ are all primitive and of arity less than $n$. 
\end{lemma}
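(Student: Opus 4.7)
The plan is to leverage primitivity to reduce to \cref{lem:gen-prim}. Since $f$ is generated from below, write $f = \gamma(s; h_1, \ldots, h_{|s|}) \cdot \sigma$ for some $s, h_1, \ldots, h_{|s|}$ of arities strictly less than $n$ and some $\sigma \in \Sigma_n$. Because $\obe{G} = \Set(G, \assoc)$ has operadic composition computed pointwise from $\assoc$, evaluating at $1_G \in G$ yields
\[
1_{\Sigma_n} = f(1_G) = s(1_G)\langle h_1(1_G), \ldots, h_{|s|}(1_G)\rangle \cdot \sigma,
\]
and hence $\sigma$ equals the inverse of the permuted block sum $s(1_G)\langle h_1(1_G), \ldots, h_{|s|}(1_G)\rangle$.

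Next, I would absorb $\sigma$ into the $\gamma$-composition using the two equivariance axioms for an operad. These axioms allow right-multiplication of $\gamma(s; h_1, \ldots, h_{|s|})$ by a permuted block sum with block sizes $|h_1|, \ldots, |h_{|s|}|$ to be rewritten as a $\gamma$-composition with the block-permutation component becoming a right $\Sigma_{|s|}$-action on $s$ and the block-sum component becoming right actions on the $h_i$'s (with appropriate reindexing). The inverse of such a permuted block sum is itself a permuted block sum of matching form, so $\sigma$ is absorbable, producing $f = \gamma(\tilde s; \tilde h_1, \ldots, \tilde h_{|s|})$ where the arities are a permutation of those of $s$ and the $h_i$, hence still all strictly less than $n$. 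In particular, this new presentation exhibits $f$ as $\gamma$-generated from below.

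Since $f$ is primitive and $\gamma$-generated from below by elements of arity less than $n$, \cref{lem:gen-prim} now supplies the desired presentation $f = \gamma(s'; h_1', \ldots, h_{|s'|}')$ with $s'$ and each $h_i'$ primitive of arity strictly between $0$ and $n$.

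The main obstacle is the bookkeeping underlying the absorption step: specifically, verifying that the inverse of $\alpha\langle \tau_1, \ldots, \tau_{|s|}\rangle$ admits a decomposition of the form $\beta\langle m_{\beta^{-1}(1)}, \ldots, m_{\beta^{-1}(|s|)}\rangle \cdot (\rho_1 \oplus \cdots \oplus \rho_{|s|})$ with parameters compatible with the block-size data $(|h_1|, \ldots, |h_{|s|}|)$ so that both equivariance axioms can be applied in sequence. This reduces to a routine but fiddly computation with the block matrix formalism from \cref{sec:prelim}, and requires particular care in tracking how block sizes reindex when inverting the block-permutation component.
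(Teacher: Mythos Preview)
Your argument is correct and uses the same key ingredient as the paper---the $\Sigma$-equivariance axioms---but organized in the reverse order. You first compute $\sigma$ explicitly as the inverse of $s(1_G)\langle h_1(1_G),\ldots,h_{|s|}(1_G)\rangle$ and then push it back through $\gamma$, whereas the paper first replaces $s$ and each $h_i$ by their primitive translates $s'=s\cdot s(1_G)^{-1}$ and $h_i'=h_i\cdot h_i(1_G)^{-1}$, pulls the resulting permuted block sum out via equivariance, and then simply observes that the leftover $\Sigma$-action $\rho\langle\tau_1,\ldots,\tau_{|s|}\rangle\cdot\sigma$ must be trivial because both $f$ and the new $\gamma$-expression are primitive.

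The paper's ordering buys you exactly the bookkeeping you flagged as the main obstacle: one never needs to decompose $\sigma$ or verify that its inverse has the right block structure. Moreover, your final appeal to \cref{lem:gen-prim} is unnecessary---once the absorption is carried out, the resulting $\tilde s$ and $\tilde h_i$ are already primitive (they are precisely the paper's $s'$ and $h'_{\rho^{-1}(i)}$), so the conclusion follows immediately rather than by a second pass.
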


\begin{proof}
Suppose some primitive $f\in \obe{G} (n)$ is generated from below, meaning
\[
  f=\gamma(s; h_1, h_2,.. h_{|s|})\cdot \sigma
\]
for some $\sigma \in \Sigma_{n}$ and $|s|;|h_1|,\ldots,|h_{|s|}|<n$. Then let $\rho=s(1_{G})$, $s'=s\cdot \rho^{-1}$, $\ta_i=h_i(1_G)$, and $h_{i}'= h_{i}\cdot \ta_i^{-1}$ for $i=1,\dots, |s|$. Note that $s_i';h_1', \dots, h_{|s|}'$ are all necessarily primitive. It follows from the $\Sigma$-equivariance axioms of an operad that
\[
  f=\gamma(s';h_{\rho^{-1}(1)}', \dots, h_{\rho^{-1}(|s|)}')\cdot(\rho \langle \ta_1,\dots, \ta_{|s|} \rangle \sigma).
\]

Let $\sigma'= \rho \langle \ta_1,\dots, \ta_{|s|} \rangle \sigma$ and $f'= \gamma(s';h_{\rho^{-1}(1)}', \dots,h_{\rho^{-1}(|s|)}' )$. Then $f'$ is primitive, since all the arguments are primitive. Then $f=f'\cdot \sigma'$. Since $f$ is also primitive, we know that $\sigma'=1_{\Sigma_{|f|}}$ and thus $f=f'$, which is the desired form. 
\end{proof}

Recall the notion of a \emph{nonsimple permutation} from \cref{defn:simple}.

\begin{lemma}\label{lemma:nonsimple}
Suppose $f\in \obe{G}(n)$ is primitive and $\gamma$-generated from below. Then $f(g)$ is nonsimple for all $g\in G$.
\end{lemma}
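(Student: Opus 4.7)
My plan is to exhibit an explicit interval of $\{1,\dots,n\}$ that $f(g)$ carries bijectively onto another interval, exploiting the block-permutation structure forced by $\gamma$-generation from below.

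I would first write $f=\gamma(s;h_1,\dots,h_{|s|})$ with all of $|s|,|h_1|,\dots,|h_{|s|}|$ strictly less than $n$. Because the operad structure on $\obe{G}=\Set(G,\assoc)$ is evaluated pointwise, for every $g\in G$ one has
\[
  f(g)=s(g)\langle h_1(g),\dots,h_{|s|}(g)\rangle=s(g)\langle |h_1|,\dots,|h_{|s|}|\rangle\cdot\bigl(h_1(g)\oplus\cdots\oplus h_{|s|}(g)\bigr).
\]
A short arity bookkeeping step then forces two inequalities: first, $|s|\ge 2$, since $|s|\le 1$ would make $|f|\le|h_1|<n$ (with the convention that the sum is $0$ when $|s|=0$), contradicting $|f|=n$; and second, $|h_{j_0}|\ge 2$ for at least one index $j_0$, since otherwise $n=\sum_j|h_j|\le |s|<n$. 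Combined with $|h_{j_0}|<n$, this yields $1<|h_{j_0}|<n$.

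With those inequalities in hand, I would read the nonsimplicity off the block matrix of $f(g)$. Set $K_j:=|h_1|+\cdots+|h_j|$ and $I:=\{K_{j_0-1}+1,\dots,K_{j_0}\}$, an interval of size $|h_{j_0}|$. The direct sum $h_1(g)\oplus\cdots\oplus h_{|s|}(g)$ preserves $I$ setwise, since each summand acts within its own block. The block permutation $s(g)\langle|h_1|,\dots,|h_{|s|}|\rangle$ then carries $I$ bijectively onto the row-block at output position $s(g)(j_0)$, which is itself a consecutive interval of $|h_{j_0}|$ indices by the explicit description of $\sigma\langle k_1,\dots,k_n\rangle$ recalled in the preliminaries. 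Thus $f(g)$ sends the interval $I$ onto an interval, and since $1<|h_{j_0}|<n$ both are nontrivial proper intervals of $\{1,\dots,n\}$; by \cref{defn:simple}, $f(g)$ is nonsimple.

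The only mildly delicate point is verifying that a permuted block sum always maps each column-block onto a single row-block of the same size; this is immediate from the block-matrix formula for $\sigma\langle k_1,\dots,k_n\rangle$ and requires no further input beyond the worked example already provided in the paper. I note that primitivity of $f$ plays no actual role in this argument; if one wished, \cref{lem:gen-prim} could be applied at the outset to take $s$ and each $h_j$ to be primitive as well, but it is not needed for the conclusion.
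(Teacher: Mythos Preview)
Your proof is correct and follows essentially the same approach as the paper: exhibit a block of size strictly between $1$ and $n$ in the permuted block sum $f(g)=s(g)\langle h_1(g),\dots,h_{|s|}(g)\rangle$ and conclude nonsimplicity from the block-matrix description. The only difference is that the paper invokes \cref{lem:gen-prim} to first arrange $|h_i|\ge 1$ for all $i$ before arguing some $|h_k|\ge 2$, whereas your direct arity bookkeeping ($|s|\ge 2$ forces $\sum|h_j|>|s|$, hence some $|h_{j_0}|\ge 2$) bypasses that lemma entirely---and, as you observe, shows that the primitivity hypothesis is not actually needed for the conclusion.
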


\begin{proof}
Assume $f=\gamma(s;h_1,\dots,h_{|s|})$ with the arity of all arguments less than $n$. By \cref{lem:gen-prim}, we may suppose $|h_i|\geq 1$ for all $i$. Moreover, since we assumed $f$ is generated from below, $1<|h_k|<n$ for some $k$. Fix an arbitrary $g\in G$. Then 
\[f(g)=s(g)  \langle h_1(g), \dots, h_{|s|}(g) \rangle.\]
Thus, by definition, $f(g)$ permutes the intervals of length $|h_1|,\dots,|h_{|s|}|$ according to $s(g)$, making it nonsimple.
\end{proof}

\begin{prop}
If an element $f\in \obe{G} (n)$ is generated from below, then $f(g) f(1_{G})^{-1}$ is a nonsimple permutation for all $g\in G.$
\end{prop}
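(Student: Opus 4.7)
The plan is to reduce the statement to the primitive case, which is already handled by \cref{lemma:nonsimple}. Set $\tau = f(1_G)^{-1}\in \Sigma_n$ and let $f' := f\cdot \tau$. Since $\obe G = \Set(G,\assoc)$, the right $\Sigma_n$-action is pointwise multiplication on values, so
\[
  f'(g) = f(g)\cdot f(1_G)^{-1} \quad \text{for every } g\in G.
\]
In particular $f'(1_G) = 1_{\Sigma_n}$, so $f'$ is primitive. Thus proving that $f(g)f(1_G)^{-1}$ is nonsimple for all $g$ is the same as proving that $f'(g)$ is nonsimple for all $g$.

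Next I would verify that $f'$ is itself generated from below. By hypothesis $f = h\cdot \si$ for some $h$ that is $\gamma$-generated from below and some $\si\in \Sigma_n$; then $f' = h\cdot(\si\tau)$, which exhibits $f'$ as a $\Sigma_n$-translate of a $\gamma$-generated-from-below element. Since generation from below is, by definition, closed under the outer $\Sigma_n$-action, $f'$ is generated from below.

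Now $f'$ is primitive \emph{and} generated from below, so \cref{lemma:prim} applies and upgrades $f'$ to being $\gamma$-generated from below, with every component in the expression $f' = \gamma(s;h_1,\ldots,h_{|s|})$ primitive and of arity strictly less than $n$. In particular, $f'$ satisfies the hypothesis of \cref{lemma:nonsimple}, which then yields that $f'(g)$ is nonsimple for every $g\in G$. Combined with the identity $f'(g) = f(g)f(1_G)^{-1}$ established above, this is exactly the desired conclusion.

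The only point that requires any care is the closure of generation from below under right multiplication by a permutation; this is immediate from the definition and is the reason the reduction to the primitive case is harmless. Everything else is a direct application of the two preceding lemmas.
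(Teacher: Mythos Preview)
Your proof is correct and follows essentially the same route as the paper: pass to the primitive element $f' = f\cdot f(1_G)^{-1}$, observe it is still generated from below, invoke \cref{lemma:prim} to upgrade to $\gamma$-generated from below, and then apply \cref{lemma:nonsimple}. The only difference is that you spell out the closure of generation from below under the $\Sigma_n$-action, which the paper leaves implicit.
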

\begin{proof}
Suppose $f\in \obe{G} (n)$ is generated from below, and let $f'=f\cdot f(1_{G})^{-1}$. Then $f'$ is primitive and generated from below, so by \cref{lemma:prim} it is $\ga$-generated from below. Thus, by \cref{lemma:nonsimple}, $f'(g)=f(g) f(1_{G})^{-1}$ is nonsimple for all $g\in G$.
\end{proof}

We can now prove that $\obe{G}$ is not finitely generated for nontrivial groups $G$.

\begin{thm}\label{cor:PG-not-fin-gen}
Let $G$ be a nontrivial finite group. For $n>3$, there exists at least one element $f\in \obe{G}(n)$ such that $f$ is not generated from below. In particular, the operad $\obe{G}$ is not finitely generated for $|G|>1$.
\end{thm}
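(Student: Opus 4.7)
The plan is to invoke the contrapositive of the immediately preceding proposition: if some $g\in G$ satisfies that $f(g)f(1_G)^{-1}$ is a simple permutation, then $f$ cannot be generated from below. So for each $n > 3$ it suffices to exhibit an $f\in \obe G(n)$ with this property.

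Since $G$ is nontrivial, fix some $g_0\neq 1_G$. For every $n > 3$ there exist simple permutations in $\Sigma_n$: the element $(1\ 2\ 4\ 3)\in \Sigma_4$ handles $n=4$, and for larger $n$ such permutations are plentiful, whether by the asymptotic density statement quoted after \cref{defn:simple} or by direct construction of examples. Pick any simple $\sigma\in \Sigma_n$ and define $f\colon G\to \Sigma_n$ by $f(1_G) = 1_{\Sigma_n}$, $f(g_0) = \sigma$, and $f(g)$ arbitrary otherwise. Then $f(g_0)f(1_G)^{-1} = \sigma$ is simple, so the preceding proposition shows that $f$ is not generated from below.

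For the ``in particular'' clause, I would argue by contradiction: suppose $\obe G = \langle S\rangle$ for some finite family $S = \{S_i\}$, with $S_i = \varnothing$ for $i > N$. By \cref{prop:genModel}, every element of $\langle S\rangle(n)$ is built from elements of $\bigcup_i S_i$ via iterated partial composition, $G$-action, and $\Sigma_n$-action. For $n > N$, any such expression must involve at least one nontrivial partial composition of operations of arity $\geq 2$; after absorbing occurrences of $\beid\in \obe G(1)$ and collecting the outermost operation, one re-expresses the element in the form $\gamma(s;h_1,\ldots,h_{|s|})\cdot \sigma'$ with $|s|, |h_j| < n$, i.e., generated from below. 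Choosing some $n > \max(N,3)$ and applying the first half of the theorem then produces an element of $\obe G(n)$ that is not generated from below, a contradiction.

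The only step requiring real care is the tree-absorption in the last paragraph: one must check that the outermost $\gamma$ in the re-expression has \emph{all} arguments of arity strictly less than $n$, which is a short combinatorial induction on the composition tree (the key observation being that any node of arity $1$ contributes only the identity $\beid$ and can be collapsed into an adjacent factor). Once that bookkeeping is in place, the whole argument is a quick application of the contrapositive together with the existence of simple permutations.
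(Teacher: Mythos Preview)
Your proof is correct and follows essentially the same route as the paper's: exhibit a primitive $f$ with $f(g_0)$ simple (hence not generated from below by the preceding proposition), then observe that any finite generating set has bounded top arity $N$, so at arity $n>\max(N,3)$ everything in $\langle S\rangle(n)$ is generated from below, contradicting the first part. The only difference is that you make explicit the ``tree-absorption'' step (reducing to an outermost $\gamma$ with all arguments of arity $<n$), which the paper leaves implicit in the line ``$\langle S\rangle \subseteq \langle\bigcup_{i=0}^{m'-1} P_G(i)\rangle$'' together with the definition of generation from below.
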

\begin{proof}
Note that at any arity $n>3$, there exists at least one simple permutation. Therefore, at any arity $n>3$, there exists a primitive element $f\in \obe{G}$ and $g\in G\smallsetminus\{1_G\}$ such that $f(g)$ is simple, and so $f$ is not generated from below.

In any candidate finite generating set $S$, there is an element of highest arity. Call this highest arity $m$. Let $m'=m+4$ (to ensure $m'>3$). Then $\langle S\rangle \subseteq\langle\bigcup_{i=0}^{m'-1} P_{G} (i) \rangle$. There exists an element in $P_{G} (m')$ that is not generated from below, and therefore not generated by $S$, so $\obe{G}$ is not finitely generated.
\end{proof}

\section{Finitely generated $E_\infty$ $G$-operads}\label{sec:biased}
We have seen that the object operad $\obe{G}$ of the $G$-equivariant Barratt-Eccles operad is not finitely generated for nontrivial $G$.  This makes the task of explicitly describing $\be{G}$-algebras with a finite amount of data seem intractable.  Fortunately, $\obe{G}$ admits finitely generated suboperads $\osubbe{G}\subseteq \obe{G}$ such that $\subbe{G} := \chao{\osubbe{G}}\simeq \be{G}$.  In this section, we introduce the operads $\osubbe{G}$ for arbitrary $G$, and then give explicit presentations of $\osubbe{G}$ for $G$ a cyclic group of order $2$ or $3$.

\subsection{The operads $\subbe{G}$}\label{subsec:QG}
Fix a finite group $G$.  Morally speaking, the suboperad $\osubbe{G}$ is generated by the operations $e\in P_G(0)$, $\otimes\in P_G(2)$ (the constant function at $1_{\Sigma_2}$), and norms for all $G$-orbits.  In order to make the last notion precise, we make the following three definitions.

\begin{defn}[{cf. \cite[Definition 2.5]{RubinRealization}}]\label{defn:norm}
Given a finite ordered $G$-set $T$, write $\otimes_T:G\to \Sigma_{|T|}$ for the permutation representation of $T$.  Considered as an element of $P_G(|T|)$, we call $\otimes_T$ an \emph{external norm for $T$}.
\end{defn} 

Note that $e$ is the norm for $\varnothing$ and $\otimes$ is the norm for any $2$-element set with trivial $G$-action.

\begin{defn}\label{defn:complete}
For a finite group $G$, let $\frN$ be a set of ordered transitive $G$-sets.  Call $\frN$ a \emph{complete set of ordered $G$-orbits} if it contains exactly one non-trivial transitive $G$-set of each isomorphism class (forgetting ordering).
\end{defn}

Clearly, we may produce a complete set of ordered $G$-orbits by arbitrarily ordering each $G/H$ as $H$ ranges through a set of representatives of conjugacy classes of proper subgroups of $G$.

\begin{defn}\label{defn:QG}
Given a complete set of ordered $G$-orbits $\frN$, let
\[
  \osubbe{G,\frN} := \langle \otimes_T\mid T\in \frN\cup \{\varnothing,\{0,1\}\}\rangle \subseteq \obe{G}
\]
where $\{0,1\}$ has trivial $G$-action and $0<1$. We define $\subbe{G,\frN}$ as the chaotic operad on $\osubbe{G,\frN}$, and we note that it is a full suboperad of $\be{G}$. We call $\subbe{G,\frN}$ the \emph{complete suboperad of $\be{G}$ relative to $\frN$}. 
If the choice of $\frN$ is understood from context, then we will write $\subbe{G}$ for $\subbe{G,\frN}$ and call it a \emph{complete suboperad of $\be{G}$}.
\end{defn}

Since any complete set of ordered $G$-orbits is finite, the operads $\osubbe{G,\frN}$ are finitely generated.  They also have the following remarkable property.

\begin{thm}\label{thm:complete}
For any finite group $G$ and complete set of ordered $G$-orbits $\frN$, $\subbe{G,\frN}$ is an $E_\infty$ $G$-operad and the inclusion $\subbe{G,\frN}\hookrightarrow \be{G}$ is an equivalence of $G$-operads.
\end{thm}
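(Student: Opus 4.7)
The plan is to reduce both assertions---that $\subbe{G,\frN}$ is a genuine $E_\infty$ $G$-operad and that the inclusion $\subbe{G,\frN}\hookrightarrow \be{G}$ is an equivalence---to a single non-emptiness statement about fixed-point sets of $\osubbe{G,\frN}$, and then to invoke Rubin's realization theorem to discharge it.

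First, because the chaotic functor is a right adjoint (to the underlying-object functor $\Cat\to \Set$) it preserves fixed points: for each subgroup $\Gamma\le G\times \Sigma_n$,
\[
  \subbe{G,\frN}(n)^\Gamma = \chao{\osubbe{G,\frN}(n)^\Gamma} \quad\text{and}\quad \be{G}(n)^\Gamma = \chao{\obe{G}(n)^\Gamma}.
\]
A chaotic category has contractible classifying space when its underlying set is non-empty and empty classifying space otherwise.  So the inclusion on $\Gamma$-fixed points is a weak equivalence of classifying spaces precisely when $\osubbe{G,\frN}(n)^\Gamma$ is non-empty whenever $\obe{G}(n)^\Gamma$ is, and $\subbe{G,\frN}$ is a genuine $E_\infty$ $G$-operad precisely when $\osubbe{G,\frN}(n)^\Gamma$ is non-empty exactly for the graph subgroups $\Gamma = \{(h,\alpha(h)) : h\in H\}$ attached to homomorphisms $\alpha\colon H \to \Sigma_n$ with $H\le G$.

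Since Guillou and May already establish that $\be{G}$ is a genuine $E_\infty$ $G$-operad (so $\obe{G}(n)^\Gamma\neq \varnothing$ iff $\Gamma$ is a graph subgroup), and since $\osubbe{G,\frN}\subseteq \obe{G}$ supplies one containment for free, both tasks collapse to the single existence statement: for every graph subgroup $\Gamma_{H,\alpha}$, exhibit an element $f\in \osubbe{G,\frN}(n)^{\Gamma_{H,\alpha}}$.  To build such an $f$ I would decompose the $H$-set determined by $\alpha$ into transitive $H$-orbits $\coprod_{i=1}^k H/K_i$, form a $k$-ary top-level node by iterating $\otimes$, and graft into the $i$th leaf a $\Gamma$-fixed element of $\osubbe{G,\frN}(|H/K_i|)$ realizing the transitive $H$-action on $H/K_i$.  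The transitive step is the crux and the place where the generating norms $\otimes_{G/K}$ from $\frN$ intervene.

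The principal obstacle is precisely this transitive step: the generators are $G$-orbit norms while the desired fixed elements are indexed by $H$-orbits, and an operad offers only composition---no ``forgetful'' projection---with which to convert one into the other.  Rubin's realization theorem in \cite{RubinRealization} (amplified in \cite{RubinThesis}) navigates this combinatorial problem by identifying the set-level suboperad generated by a complete collection of norms with the realization of the corresponding indexing system; applied to $\frN$ as in \cref{defn:complete}, the associated indexing system is the complete one, whose realization is genuine $E_\infty$.  Invoking this identification produces the required $\Gamma$-fixed elements uniformly across all graph subgroups, and the chaotic reduction of the second paragraph then delivers both the $E_\infty$ property of $\subbe{G,\frN}$ and the equivalence of its inclusion into $\be{G}$ at the same time.
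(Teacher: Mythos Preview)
Your proposal is correct and follows essentially the same approach as the paper: both reduce the problem to Rubin's results on operads generated by a complete set of norms, with your version making the chaotic/fixed-point reduction explicit while the paper instead observes that $\osubbe{G,\frN}$ receives a surjection from Rubin's operad $\free(\frN\cup\{\varnothing,\{0,1\}\})/\langle R\rangle$ (which is already known to be $E_\infty$ after applying $\chao{(-)}$) and inherits $\Sigma$-freeness from $\obe{G}$. The paper's argument is terser, but the content is the same.
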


\begin{proof}
The operad $\osubbe{G,\frN}$ is $\Si$-free since it is a suboperad of $\obe{G}$, and it is a quotient of $\free(\{\frN\cup \{\varnothing,\{0,1\}\})/\langle R\rangle$ where $R$ encodes the relations $g\cdot \varnothing = \varnothing$, $g\cdot \{0,1\}=\{0,1\}$, and $g\cdot G/H = G/H\cdot \sigma_{G/H}(g)$ for $g\in G$ and $H<G$. As noted in \cite[Example 6.5]{RubinThesis}, $\free(\{\frN\cup \{\varnothing,\{0,1\}\})/\langle R\rangle$ is an $E_\infty$ $G$-operad after applying $\widetilde{(~)}$, i.e., it is an $N_\infty$ operad that contains all norms. Thus, the same is true for $\subbe{G,\frN}$ and the result follows.
\end{proof}

 \begin{rem}
 The equivalence of operads $\subbe{G,\frN}\to\be{G}$ induces a biequivalence between their respective 2-categories of algebras and lax maps (see \cite[Theorem 4.21]{RubinThesis}).
 \end{rem}

\subsection{Presentation for $Q_{C_2}$}
In this subsection, we specialize to $G=C_2$, which we consider to have generator $g$.  Note that the operad $\osubbe{C_2}$ has three generators:
\begin{enumerate}
  \item $e\in \obe{C_2}(0)$,
  \item $\otimes\in \obe{C_2}(2)$, which is the function constant at the identity permutation, and
  \item $\boxtimes \in \obe{C_2}(2)$, which sends $1_{C_2}$ to the identity permutation $1_{\Si_2}$ and $g$ to the permutation $(1\ 2)$.
\end{enumerate}

We thus have a map of operads $\phi: \free\{e,\otimes,\boxtimes\}\to \osubbe{C_2}$ determined by sending each generator to its namesake.  The following theorem gives a presentation for $\osubbe{C_2}$.

\begin{thm}\label{thm:pres-c2}
The operad $\osubbe{C_2}$ is isomorphic to $\free{\{e,\otimes,\boxtimes\}}/\langle R \rangle$, where $R$ consists of the following:
\begin{enumerate}
    \item Strict unit: 
    \[\gamma(\boxtimes; \beid, e) = \gamma(\boxtimes;e,\beid) = \gamma(\otimes; \beid,e) = \gamma(\otimes; e, \beid) = \beid;\]
    
    \item Strict associativity: for any primitive $\dia \in \osubbe{C_2}(2),$
    \[ \gamma(\dia; \dia, \beid) = \gamma(\dia; \beid, \dia);\]
    
    \item Group action: $g \cdot \boxtimes = \boxtimes \cdot (1\ 2)$, $g\cdot \otimes = \otimes$, and $g\cdot e = e$.
  \end{enumerate}
\end{thm}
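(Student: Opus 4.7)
The plan is to prove that the map $\bar\phi: \free\{e,\otimes,\boxtimes\}/\langle R\rangle \to \osubbe{C_2}$ induced by $\phi$ is an isomorphism. First I would verify that each family of relations in $R$ holds in $\osubbe{C_2}$: the strict unit and group action relations follow immediately from the definitions of $e$, $\otimes$, and $\boxtimes$ as $C_2$-functions into symmetric groups together with the operadic unit laws; strict associativity is trivial for $\dia=\otimes$ (both sides are constant at $1_{\Sigma_3}$) and for $\dia=\boxtimes$ reduces to the single calculation
\[
  (1\ 2)\langle (1\ 2),\beid\rangle \;=\; (1\ 3) \;=\; (1\ 2)\langle \beid,(1\ 2)\rangle
\]
in $\Sigma_3$. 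The map $\bar\phi$ then exists by the universal property of operadic quotients, and it is surjective because $\osubbe{C_2}$ is by definition the suboperad of $\obe{C_2}$ generated by $e,\otimes,\boxtimes$ (\cref{defn:QG}).

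The main work is injectivity of $\bar\phi$. I would establish this via a normal form: for arity $0$ the only class is $e$, while for arity $n\geq 1$ I claim that every class in $\free\{e,\otimes,\boxtimes\}/\langle R\rangle$ has a unique representative of the form $\tilde T\cdot \sigma$ with $\sigma\in \Sigma_n$, where $\tilde T$ is a \emph{right-combed} planar rooted binary tree on $n$ leaves whose internal nodes are labeled $\otimes$ or $\boxtimes$ subject to the condition that no internal node shares a label with its left child. Reducing an arbitrary representative to this form uses each of the three relation families in $R$: strict unit collapses each $e$-leaf together with its parent to $\beid$; the group action relations, combined with the operadic $G$- and $\Sigma$-equivariance axioms, push every internal $g$-label to the root and then absorb the resulting $C_2$-action into the $\Sigma_n$-action at the root (via the derived identity $g\cdot \tilde T = \tilde T'\cdot \tilde T(g)$, where $\tilde T'$ is the primitive tree whose $g$-value is $\tilde T(g)^{-1}$); and strict associativity $\gamma(\dia;\dia,\beid) = \gamma(\dia;\beid,\dia)$ converts the underlying binary tree to right-combed form.

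It then remains to show that distinct right-combed normal forms $\tilde T\cdot \sigma$ map to distinct elements of $\osubbe{C_2}$. The key observation is that operadic composition in $\osubbe{C_2}$ realizes block and skew sums on $g$-values: for primitive $\tilde f_1,\tilde f_2$,
\[
  \gamma(\otimes;\tilde f_1,\tilde f_2)(g) = \tilde f_1(g)\oplus \tilde f_2(g), \qquad \gamma(\boxtimes;\tilde f_1,\tilde f_2)(g) = (1\ 2)\langle \tilde f_1(g),\tilde f_2(g)\rangle.
\]
Hence the $g$-value of a primitive element of $\osubbe{C_2}(n)$ is a separable permutation in the sense of \cref{defn:separable}, and the right-combed tree $\tilde T$ is exactly the data of an irreducible iterated direct/skew-sum (Schr\"oder) decomposition of this permutation. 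The classical fact that each separable permutation admits a \emph{unique} such decomposition yields injectivity on primitives, and the rightward $\Sigma_n$-action is then faithful on primitives, giving injectivity overall. As a sanity check, the right-combed $\otimes/\boxtimes$-trees on $n$ leaves are enumerated by the large Schr\"oder number $s_n$, matching the count $|\osubbe{C_2}(n)| = s_n\cdot n!$.

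The main obstacle will be this last uniqueness/coherence statement: while reaching a right-combed form is straightforward bookkeeping with the relations, the fact that this form is canonical rests on the classical combinatorial theorem on unique Schr\"oder decompositions of separable permutations. Everything else in the argument is routine application of the operad axioms together with the three families of relations in $R$.
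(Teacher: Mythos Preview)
Your proposal is correct and follows essentially the same strategy as the paper: reduce to primitive elements via the group-action relation and $\Sigma$-equivariance, put primitives in a canonical tree form using the unit and associativity relations, and then identify these canonical trees with separable permutations via the classical Shapiro--Stephens bijection. The only cosmetic difference is the combing convention: the paper uses left-combed canonical trees (no $\dia$ grafted on the \emph{right} branch of another $\dia$) and cites \cite{ShapiroStephens} directly, whereas you comb to the right; both choices are equivalent by the symmetry of the associativity relation.
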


In order to prove the theorem we will use tree representations for elements in the operad $\oO=\free{\{e,\otimes,\boxtimes\}}/\langle R \rangle$. Let $q\colon \free{\{e,\otimes,\boxtimes\}} \to \oO$ denote the quotient map.

\begin{defn}\label{treerep}
Let $\tree{n}\subset \free{\{e,\otimes,\boxtimes\}}(n)$ be the set of planar rooted trees with nodes labeled by $(1_G,e)$, $(1_G,\otimes)$ and $(1_G,\boxtimes)$, i.e., without using the $G$ and $\Sigma$ actions. For simplicity, we will refer to these nodes by their second coordinate. 

Let $\cantree{n}\subseteq \tree{n}$ denote the elements of $\free{\{e,\otimes,\boxtimes\}}(n)$ derived from trees with only $\otimes$ and $\boxtimes$ nodes and such that no instance of $\otimes$ is grafted directly to the right branch of another instance of $\otimes$, and similarly, no instance of $\boxtimes$ is grafted directly to the right branch of another instance of $\boxtimes$; we call elements of $\cantree{n}$ \emph{canonical trees}.
\end{defn}

\begin{rem}\label{treerotation}
Given an arbitrary tree $T\in \tree{n}$, we can get a canonical tree $T_c$ with $q(T_c)=q(T)$ by first replacing every instance of
\[
\begin{tikzpicture}[scale=0.75]
\begin{scope}[xshift=9pt,yshift=-5in,grow'=up,
frontier/.style={distance from root=150pt}]
\Tree [.$\otimes$ 
[.$T'$ ]
[.$e$ ] ]
\end{scope}
\end{tikzpicture} 
\qquad
\begin{tikzpicture}[scale=0.75]
\begin{scope}[xshift=9pt,yshift=-5in,grow'=up,
frontier/.style={distance from root=150pt}]
\Tree [.$\otimes$ 
[.$e$ ]
[.$T'$ ] ]
\end{scope}
\end{tikzpicture}
\qquad 
\begin{tikzpicture}[scale=0.75]
\begin{scope}[xshift=9pt,yshift=-5in,grow'=up,
frontier/.style={distance from root=150pt}]
\Tree [.$\boxtimes$ 
[.$T'$ ]
[.$e$ ] ]
\end{scope}
\end{tikzpicture}
\qquad
\begin{tikzpicture}[scale=0.75]
\begin{scope}[xshift=9pt,yshift=-5in,grow'=up,
frontier/.style={distance from root=150pt}]
\Tree [.$\boxtimes$ 
[.$e$ ]
[.$T'$ ] ]
\end{scope}
\end{tikzpicture}
\]
by $T'$ itself, using the first relation. We then obtain $T_c$ by rotating nodes to the left when possible, i.e., replace every instance of 
\[
\begin{tikzpicture}[scale=0.75]
\begin{scope}[xshift=9pt,yshift=-5in,grow'=up,
frontier/.style={distance from root=150pt}]
\Tree [.$\dia$ 
[.$T_1$ ]
[.$\dia$ 
[.$T_2$ ] [.$T_3$ ] ] ]
\end{scope}
\end{tikzpicture}  \parbox[c][.8in][t]{2cm}{\begin{center}\text{with}\end{center}}  
\begin{tikzpicture}[scale=0.75]
\begin{scope}[xshift=9pt,yshift=-5in,grow'=up,
frontier/.style={distance from root=150pt}]
\Tree [.$\dia$ 
[.$\dia$ 
[.$T_1$ ] 
[.$T_2$ ] ] [.$T_3$ ] ]
\end{scope}
\end{tikzpicture}
\] 

\vspace{-.9cm} \noindent where $\dia$ is either $\otimes$ or $\boxtimes$ and $T_1, T_2, T_3$ are trees. For example, the process can be described visually as follows:

\[
\begin{tikzpicture}[scale=0.75]
\begin{scope}[xshift=9pt,yshift=-5in,grow'=up,
frontier/.style={distance from root=150pt}]
\Tree [.$\boxtimes$ 
[.$\otimes$ [.$e$ ] 
    [.$\otimes$ [.$\,$ ] 
        [.$\boxtimes$ [.$\,$ ] [.$\,$ ] ] ] ]
[.$\boxtimes$ [.$e$ ] 
    [.$\otimes$  
        [.$\boxtimes$ [.$\,$ ] [.$\,$ ] ]
        [.$e$ ] ] ] ]
\end{scope}
\end{tikzpicture}
\parbox[c][1in][t]{1.5cm}{\begin{center}$\leadsto$\end{center}}
\begin{tikzpicture}[scale=0.75]
\begin{scope}[xshift=9pt,yshift=-5in,grow'=up,
frontier/.style={distance from root=150pt}]
\Tree [.$\boxtimes$ 
[.$\otimes$ [.$\,$ ] 
        [.$\boxtimes$ [.$\,$ ] [.$\,$ ] ] ]
[.$\boxtimes$ [.$\,$ ] [.$\,$ ] ] ] 
\end{scope}
\end{tikzpicture}
\parbox[c][1in][t]{1.5cm}{\begin{center}$\leadsto$\end{center}}
\begin{tikzpicture}[scale=0.75]
\begin{scope}[xshift=9pt,yshift=-5in,grow'=up,
frontier/.style={distance from root=150pt}]
\Tree [.$\boxtimes$ 
[.$\boxtimes$ 
  [.$\otimes$ 
    [.$\,$ ] [.$\boxtimes$ [.$\,$ ] [.$\,$ ] ] ]
  [.$\,$ ] ] [.$\,$ ] ] 
\end{scope}
\end{tikzpicture}.
\]

\vspace{-.8cm} This process always ends in a canonical tree. At this point we do not claim that $T_c$ is unique, although that will be a consequence of the proof of \cref{thm:pres-c3} below.
\end{rem}

\begin{proof}[Proof of \cref{thm:pres-c2}]
The reader can check that all the relations in $R$ are indeed satisfied in $\obe{C_2}$, and hence in $\osubbe{C_2}$. Thus, by \cref{rem:surj}, the map $\phi$ induces a level-wise surjective map $\phi:\oO\to \osubbe{C_2}$.  Call an element of $\oO(n)$ \emph{primitive} when it has a representative in $\tree{n}$; recall that an element $f\in\osubbe{C_2}(n)$ is primitive if $f(1_G)=1_{\Sigma_n}$. Note that $\phi$ restricts to primitive elements. We will first show that it suffices to prove that $\phi$ induces a bijection $\prim \oO(n)\to \prim \osubbe{C_2}(n)$ for all $n$.

Indeed, by the equivariance axiom and relation (3), every element in $\oO(n)$ is of the form $f\cdot \si$ for some primitive $f$ and some $\si\in\Si_n$. If $\phi(f\cdot \si)=\phi(f'\cdot \si')$ for $f$ and $f'$ primitive, then $\phi(f)\cdot \sigma=\phi(f')\cdot \sigma'$ by $\Sigma$-equivariance. Evaluating at $1_{C_2}$ shows that $\si=\si'$, and hence $\phi(f)=\phi(f')$. Thus, if $\phi$ is injective on primitive elements it is injective on all of $\oO$.

By \cref{treerotation}, all primitive elements of $\oO(n)$ are represented by a canonical tree $t\in \cantree{n}$, and thus $q\colon \cantree{n} \to \prim\oO(n)$ is surjective. It follows that the composite $\phi\circ q\colon \cantree{n} \to \prim \osubbe{C_2}(n)$ is also surjective.  Furthermore, for $t\in \cantree{n}$, the value of $\phi \circ q(t)$ on the generator of $C_2$ is the separable permutation produced by interpreting the leaves as $1_{\Sigma_1}$, $\otimes$ as block sum, and $\boxtimes$ as skew sum.  By the proof of \cite[Theorem 1]{ShapiroStephens}, separable permutations are in fact in bijection with canonical trees,\footnote{The reference \cite{ShapiroStephens} predates the term ``separable permutation,'' but the discussion on p.277 may be interpreted in this language.} and we conclude that $\phi \circ q$  is a bijection onto the primitive elements of $\osubbe{C_2}(n)$.  It follows that $\phi\colon \prim \oO(n) \to \prim \osubbe{C_2}(n)$ is also a bijection, as desired.
\end{proof}

As an aside, we note that it is possible to enumerate $\prim \osubbe{C_2}(n)$ in terms of the large Schr\"oder numbers.

\begin{defn}
The \emph{large Schr\"oder numbers} are the integers $S_i$ with $S_0=1$, $S_1=2$, and 
$S_n$ for $n\geq 2$ given by the recurrence relation
\[
  S_n = 3S_{n-1}+\sum_{k=1}^{n-2} S_k S_{n-k-1}.
\]
The first several terms in the sequence are
\[
    1, ~2, ~6, ~22, ~90, ~394, ~1,806, ~8,558, ~41,586, ~206,098, ~1,037,718,~\ldots.
\]
\end{defn}

By \cite[Theorem 1]{ShapiroStephens} and the bijection in our proof of \cref{thm:pres-c2}, we know that
\[
  |\prim \osubbe{C_2}(n)| = S_{n-1}.
\]
We initially discovered the connection between $\osubbe{C_2}$ and separable permutations via computer experimentation and reference to Sloane's OEIS \cite{OEIS}.

\subsection{Presentation for $Q_{C_3}$}\label{subsec:c3}

Now we consider $G=C_3$, and we will denote one of the generators by $g$. As it is the case for $C_2$, the operad $\osubbe{C_3}$ has three generators:
\begin{enumerate}
    \item $e \in \obe{C_3}(0)$,
    
    \item $\otimes \in \obe{C_3}(2)$, which is the function constant at the identity permutation, and
    
    \item $\boxtimes \in \obe{C_3}(3)$, which sends $1_{C_3}$ to the identity permutation $1_{\Si_3}$, $g$ to the permutation $(1\ 2\ 3)$, and $g^2$ to $(1\ 3\ 2)$.
\end{enumerate} 

Note that all primitive elements in $\obe{C_3}(2)$ are in $\osubbe{C_3}(2)$, one being $\otimes$ and the other three given by the partial compositions $\boxtimes \circ_i e$ for $i=1,2,3$ (see \cref{c3outputs}).

Similar to the case for $C_2$, we have a map of operads $\phi\colon \free{\{e,\otimes,\boxtimes\}} \rightarrow \osubbe{C_3}$ determined by sending each generator to its namesake. The main goal of this section is to prove the following theorem.

\begin{thm}\label{thm:pres-c3}
There is an isomorphism of operads $\free{\{e,\otimes,\boxtimes\}}/\langle R \rangle \cong \osubbe{C_3}$, where $R$ consists of the following:
\begin{enumerate}
    \item Reduction to identity: there is only one element in $\obe{C_3}(1)$, hence 
   \[\gamma(\boxtimes; \beid, e, e) = \gamma(\boxtimes;e,\beid,e) = \gamma(\boxtimes;e,e,\beid) = \gamma(\otimes; \beid,e) = \gamma(\otimes; e, \beid) = \beid;\]
    
    \item Strict associativity: primitive elements in $\osubbe{C_3}(2)$ follow strict associativity, i.e., for any $\dia \in \prim\osubbe{C_3}(2),$
    \[ \gamma(\dia; \dia, \beid) = \gamma(\dia; \beid, \dia);\] 
    
    \item Group action:  $g \cdot \boxtimes = \boxtimes \cdot (1\ 2\ 3)$, $g\cdot\otimes=\otimes$, $g\cdot e=e$.
  \end{enumerate}
\end{thm}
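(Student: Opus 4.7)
The plan is to follow the template of the proof of \cref{thm:pres-c2} as closely as possible, isolating the step at which the two arguments truly diverge. First, verify that the relations in $R$ hold in $\osubbe{C_3}$: (1) and (3) are routine evaluations from the definitions of $e$, $\otimes$, $\boxtimes$ as $C_3$-indexed permutations, and (2) amounts to strict associativity of composition in $\Sigma_2$ applied pointwise over $C_3$. This yields a surjective operad map $\phi\colon \oO := \free{\{e,\otimes,\boxtimes\}}/\langle R\rangle \to \osubbe{C_3}$. Then, exactly as in the $C_2$ case, relation (3) lets us push every $C_3$-action onto the generators while $\Sigma_n$-equivariance pushes all permutations to the root, writing any element of $\oO(n)$ as $f\cdot \sigma$ with $f$ primitive. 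Evaluation at $1_{C_3}$ reduces the theorem to showing that $\phi$ restricts to a bijection $\prim \oO(n)\to \prim \osubbe{C_3}(n)$ for every $n$.

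Next, define canonical trees $\cantree{n}\subseteq \tree{n}$ to be planar rooted trees on $n$ leaves whose internal nodes are either $\otimes$ (with two non-$e$ children) or $\boxtimes$ (with at most one $e$-child), and in which no primitive 2-ary operation is right-nested inside itself; here the four primitive 2-ary operations are $\otimes$ and the three variants $\boxtimes\circ_i e$ for $i=1,2,3$. Mimicking \cref{treerotation}, use relation (1) to eliminate $e$-leaves from under $\otimes$ nodes and to collapse any $\boxtimes$ with two or more $e$-children, and relation (2) to rotate right-nested primitive 2-ary operations to the left. This rewriting terminates, so the resulting map $q\colon \cantree{n}\to \prim \oO(n)$ is surjective.

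The crux of the proof---and the step where the $C_3$ argument departs strikingly from its $C_2$ counterpart---is showing that $\phi\circ q\colon \cantree{n}\to \prim \osubbe{C_3}(n)$ is a bijection. Since no analogue of the separable-permutation identification from \cref{thm:pres-c2} is available, the plan is to proceed by strong induction on arity. Given a primitive $f\in \osubbe{C_3}(n)$, reconstruct the outermost node of its would-be canonical tree by analyzing how $f(g)$ and $f(g^2)$ simultaneously decompose into blocks: an outer $\otimes$ produces a two-block block-sum decomposition, while an outer $\boxtimes$ (possibly with one $e$-leaf) produces a three-block cyclic decomposition whose empty-block pattern, if any, selects between $\boxtimes$, $\boxtimes\circ_1 e$, $\boxtimes\circ_2 e$, and $\boxtimes\circ_3 e$. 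A direct computation shows that the four primitive 2-ary operations are distinguished by their values on $g$ and $g^2$, so the case of $n=2$ serves as the base case, and the no-right-nesting condition resolves the ambiguous cases where both decomposition types coexist, selecting a unique root; the sub-trees are then determined by the inductive hypothesis applied to each block.

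The main obstacle is precisely this overlap analysis: one must verify that the canonical-form restrictions eliminate every ambiguity arising when a permutation simultaneously admits a nontrivial block-sum and a nontrivial block-cyclic decomposition, and that the outer node thus read off is invariant under the choice of presentation. Without a ready-made combinatorial identity to invoke, this requires detailed bookkeeping tracking how block sums and cyclic block shifts of various sizes can coexist inside a single permutation---and it is this case analysis, rather than any clean counting argument, that makes the $C_3$ proof substantially more intricate than its $C_2$ counterpart.
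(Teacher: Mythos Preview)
Your outline is correct and matches the paper's argument through the reduction to primitives and to canonical trees. The genuine divergence is in how injectivity of $\phi\circ q\colon \cantree{n}\to \prim\osubbe{C_3}(n)$ is established. You propose a \emph{root-first} induction: read off the outermost node of a canonical tree from the coarsest simultaneous block decomposition of $f(g)$ and $f(g^2)$, then recurse on the blocks. The paper runs a \emph{leaf-first} induction instead: it introduces the notion of an \emph{uncovered node} (an essential node with nothing of arity $>1$ grafted above it) and proves in \cref{cruciallem2} that uncovered nodes are detected intrinsically as $j$-column-simultaneous occurrences of the fixed $2\times 2$ or $3\times 3$ patterns $t(g),\,t(g^2)$ inside the permutation matrices of $f(g)$ and $f(g^2)$. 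Two canonical trees with the same $f$ then share an uncovered node at the same position; removing it and applying the inductive hypothesis finishes \cref{prop:c3cantree}.

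Both directions require the overlap analysis you anticipate, but they are not equally packaged. The paper's leaf-first approach has the practical advantage that the patterns being matched are of bounded size, so the case analysis becomes a finite visual check that no two such small patterns can partially overlap inside a permutation matrix---this is exactly what the proof of \cref{cruciallem2} carries out, essential node by essential node. Your root-first approach must instead control coexisting block decompositions of arbitrary sizes; moreover, the role you assign to no-right-nesting (``selecting a unique root'') is slightly misdirected: that condition pins down the split point once the root \emph{type} is known, but it does not by itself decide between, say, a ternary $\boxtimes$ root with several admissible $(k_1,k_2,k_3)$ or between $\boxtimes$ and a binary $\dia$. Those exclusions need their own argument. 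Your route should go through, but the paper's choice trades your global block bookkeeping for local pattern-matching on fixed-size submatrices.
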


As in the case of $C_2$, the reader can check that all these relations are indeed satisfied in $\obe{C_3}$, and hence in $\osubbe{C_3}$. Thus, $\phi$ induces a surjective map of operads from $\oO=\free{\{e,\otimes,\boxtimes\}}/\langle R \rangle$ to $\osubbe{C_3}$. The hard work is to show that these relations generate all the rest, thus giving the desired isomorphism.

The proof is similar to that of \cref{thm:pres-c2}. We start by defining the trees that will be used in this context.

\begin{defn}\label{defn:treec3}
We now define $\tree{n}\subset \free{\{e,\otimes,\boxtimes\}}$ as the set of planar rooted trees with nodes labeled by $(1_G,e)$, $(1_G,\otimes)$ and $(1_G,\boxtimes)$, which we refer to by their second coordinate. Note that in this case $\boxtimes$ has three inputs and one output. Elements in $\oO$ represented by such trees are called \emph{primitive}. We call a tree \emph{reduced} if at any node the number of branches that are not marked by $e$ is at least 2. 

We call the collection of $\boxtimes$ and the four canonical binary trees, which are $\otimes$ and the three possible graftings of $e$ on $\boxtimes$, \emph{the essential nodes}. Their corresponding functions in $\osubbe{C_3}$ have outputs as in \cref{c3outputs}. Since their corresponding functions are distinct, we will not distinguish between these nodes and their associated function. 
\begin{center}
\begin{table}[h]
\begin{tabular}{|c|c|c|}
\hline
    essential primitive nodes & output of $g$ & output of $g^2$ \\ \hline 
    $\otimes$ & $\left (\begin{array}{cc}
        1 & 0 \\
        0 & 1
    \end{array} \right )$ & $\left (\begin{array}{cc}
        1 & 0 \\
        0 & 1
    \end{array} \right )$\\ \hline
    $\boxtimes$ & $\left (\begin{array}{ccc}
        0 & 0 & 1 \\
        1 & 0 & 0 \\
        0 & 1 & 0
    \end{array} \right )$ & $\left (\begin{array}{ccc}
        0 & 1 & 0 \\
        0 & 0 & 1 \\
        1 & 0 & 0
    \end{array} \right )$ \\ \hline
    $\gamma(\boxtimes; e, \beid, \beid)$ & $\left (\begin{array}{cc}
        0 & 1 \\
        1 & 0
    \end{array} \right )$ & $\left (\begin{array}{cc}
        1 & 0 \\
        0 & 1
    \end{array} \right )$\\ \hline
    $\gamma(\boxtimes; \beid, e, \beid)$ & $\left (\begin{array}{cc}
        0 & 1 \\
        1 & 0
    \end{array} \right )$ & $\left (\begin{array}{cc}
        0 & 1 \\
        1 & 0
    \end{array} \right )$\\ \hline
    $\gamma(\boxtimes; \beid, \beid, e)$ & $\left (\begin{array}{cc}
        1 & 0 \\
        0 & 1
    \end{array} \right )$ & $\left (\begin{array}{cc}
        0 & 1 \\
        1 & 0
    \end{array} \right )$\\ \hline
\end{tabular}
\caption{Outputs of essential primitive elements for $C_3$}\label{c3outputs}
\end{table}
\end{center}
 
 Lastly, the set $\cantree{n}$ of \emph{canonical trees} in this case consists of those trees that are reduced and such that for all essential binary nodes $\dia$, the node grafted on its right input, if there is any, is different than $\dia$. 
\end{defn}

\begin{proof}[Proof of \cref{thm:pres-c3}]
Following the strategy of the proof of \cref{thm:pres-c2}, we see that the the equivariance relation reduces the problem to proving that the primitive elements are in bijection.  By the analogue of \cref{treerotation}, this reduces to proving the following proposition.
\end{proof}

\begin{prop}\label{prop:c3cantree}
 The composite $\phi \circ q \colon \cantree{n} \to \prim\osubbe{C_3}(n)$ is a bijection for all $n\geq 0$.
\end{prop}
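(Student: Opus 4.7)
The plan is to prove \cref{prop:c3cantree} by establishing surjectivity and injectivity of $\phi\circ q$ separately. Surjectivity should follow from a direct $C_3$-analogue of the rewriting argument in \cref{treerotation}: starting from any tree representative of a primitive element of $\oO(n)$, relation (1) suppresses $e$-children to produce a reduced tree, and relation (2) performs left-rotations of any essential binary node nested on the right input of another copy of itself. Both kinds of moves strictly decrease a suitable complexity measure on trees, so the procedure terminates in a member of $\cantree n$.

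For injectivity, I would induct on $n$, with $n\le 1$ trivial since $\cantree n$ is then a singleton. For $n\ge 2$, suppose $T,T'\in\cantree n$ with $f:=\phi\circ q(T)=\phi\circ q(T')$, and write $R,R'$ for the roots with children $T_1,\ldots,T_k$ and $T'_1,\ldots,T'_{k'}$ of leaf counts $(n_1,\ldots,n_k)$ and $(n'_1,\ldots,n'_{k'})$. It suffices to show that $R=R'$, that the two partitions agree, and that $\phi\circ q(T_j)=\phi\circ q(T'_j)$ for each $j$; restricting $f$ to the $j$th root block supplies the third condition, whereupon the inductive hypothesis gives $T_j=T'_j$ and hence $T=T'$.

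The heart of the argument is the extraction of $(R,n_1,\ldots,n_k)$ from $f$ alone. The idea is to read off the root partition $(I_1,\ldots,I_k)$ as the coarsest ordered interval partition of $\{1,\ldots,n\}$ with at least two blocks such that (a) both $f(g)$ and $f(g^2)$ permute the $I_j$ as blocks, and (b) the induced pair of block permutations matches one of the five essential-node patterns recorded in \cref{c3outputs}. The canonicality of $T$ is precisely what makes this choice unambiguous: a strictly coarser interval partition satisfying (a) and (b) would correspond to a factorization of $f$ through an essential node whose right input is itself decomposed using the same node, exactly the configuration outlawed by \cref{defn:treec3}. Once $(I_1,\ldots,I_k)$ is fixed, $R$ is determined by matching the induced block action with \cref{c3outputs}, whose five rows are pairwise distinct.

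The main obstacle I anticipate is making the coarsest-partition argument airtight. Condition (a) alone is insufficient for uniqueness (e.g.\ the identity respects every interval partition), so the argument must genuinely exploit (b) in tandem with canonicality. I expect to split into cases based on whether the root is the ternary node $\boxtimes$ or one of the four essential binary nodes, and within the latter to distinguish the four possibilities by the swap-versus-identity patterns of $f(g)$ and $f(g^2)$ on the two root intervals, mirroring the rows of \cref{c3outputs}.
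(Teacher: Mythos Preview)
Your approach differs genuinely from the paper's: you decompose a canonical tree at the \emph{root}, whereas the paper decomposes at an \emph{uncovered node} (a node all of whose inputs are leaves). The paper's key lemma (\cref{cruciallem2}) characterizes uncovered nodes of $T$ as exactly those $j$-column-simultaneous occurrences of essential-node patterns in $f(g),f(g^2)$ that do not extend to column $j-1$; these are local $2\times 2$ or $3\times 3$ blocks, so the required case analysis is about ruling out partial overlaps of small patterns. Your root-based approach instead requires reading off the global block structure of $f$, and then descending into the blocks.

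The gap is in your ``coarsest partition'' criterion. Consider $T=\gamma(\otimes;\gamma(\otimes;\beid,\beid),\beid)\in\cantree{3}$, so $f(g)=f(g^2)=1_{\Sigma_3}$. Both interval partitions $(\{1\},\{2,3\})$ and $(\{1,2\},\{3\})$ satisfy (a) and (b) with induced node $\otimes$, and neither is coarser than the other in the refinement order---they are incomparable two-block partitions. Only the second is the root partition of $T$. Your justification (``a strictly coarser partition \ldots\ would correspond to \ldots\ the configuration outlawed'') does not apply: the competing partition is not strictly coarser, it just has a different split point. Canonicality forbids an $\otimes$ on the \emph{right} input of an $\otimes$, which translates not into ``no coarser partition exists'' but rather into ``among two-block partitions matching a given binary node $\diamond$, the root partition is the one with \emph{maximal} left block''---and you would still need to rule out interference from two-block partitions matching a different binary node, or from three-block $\boxtimes$-partitions, when the root is binary.

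This is all fixable with a case analysis of similar depth to the paper's, but the organizing principle should be ``rightmost valid split'' for each binary node (and a separate argument that a $\boxtimes$-pattern at the top level forces the root to be $\boxtimes$), not ``coarsest''. The paper's leaf-based strategy sidesteps this by detecting small local patterns rather than global block structure; the price is the somewhat intricate overlap analysis in the proof of \cref{cruciallem2}.
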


 We will show this by examining the matrices given by the canonical trees when evaluated at $g$ and $g^2$. 

\begin{defn}\label{uncoverednode}
Let $T\in \cantree{n}$. An \emph{uncovered node} of $T$ is a node such that no node of arity greater than 1 is grafted upon it, i.e., all its leaves are marked by $e$ or unmarked. 
\end{defn}

\begin{defn}\label{columnsimul}
Let $A, B$ be $n \times n$ permutation matrices and $C,D$ be $k \times k$ permutation matrices for some $k \leq n$. We say $C$ and $D$ occur \emph{$j$-column-simultaneously in $A$ and $B$, respectively} if $C$ and $D$ appear as blocks within $A$ and $B$, starting on the $j$-th column.\end{defn}

The following proposition describes how the uncovered nodes in a canonical tree $T$ are detected by the outputs of the corresponding function $\phi\circ q(T)$.

\begin{prop}[Uncovered Nodes]\label{cruciallem2}
Let $T\in \cantree{n}$, $f=\phi\circ q(T)$ be the primitive element in $\osubbe{C_3}(n)$ it represents, $t$ an essential node of arity $a$, and $j=1,\dots , n-a+1$. Then the following are equivalent:
\begin{enumerate}
\item The permutation matrices for $f(g)$ and $f(g^2)$ contain a $j$-column-simultaneous instance of $t(g)$ and $t(g^2)$, respectively, but not a $(j-1)$-column simultaneous instance.
\item There exists $T' \in \cantree{n-a+1}$ such that $T = T' \circ_j t$. 
\end{enumerate}
\end{prop}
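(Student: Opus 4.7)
The plan is to proceed by strong induction on the number of essential-node occurrences in $T$. The base case, when $T$ consists of a single essential node, is immediate: $f = t$, so $(t(g), t(g^2))$ appears as a block at columns $1, \ldots, a$ and there is no column to the left; this matches the decomposition $T = \beid \circ_1 t$.

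For the inductive step, write $T = s(T_1, \ldots, T_k)$ where $s$ is the root essential node of arity $k$, and set $f_i = \phi \circ q(T_i)$, $n_i = |T_i|$, $N_c = \sum_{i \leq c} n_i$. The operadic composition formula yields
\[ f(g) = s(g)\langle f_1(g), \ldots, f_k(g)\rangle, \]
so column block $c$ (columns $N_{c-1}+1, \ldots, N_c$) sits inside row block $s(g)(c)$ and carries the internal permutation $f_c(g)$, with analogous statements for $g^2$. For the direction $(2) \Rightarrow (1)$, the existence of a $j$-column-simultaneous instance is a direct unpacking of $f = f' \circ_j t$: the partial composition places $t(g)$ and $t(g^2)$ as contiguous $a \times a$ blocks at columns $j, \ldots, j+a-1$ in $f(g)$ and $f(g^2)$ respectively.

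The direction $(1) \Rightarrow (2)$ is the heart of the argument. Given a $j$-column-simultaneous instance, I would split into two cases based on the interaction of $\{j, \ldots, j+a-1\}$ with the column blocks of $f(g)$. In \emph{Case A}, the columns lie inside a single column block $c$; then the sub-block appears within $f_c(g)$ and $f_c(g^2)$ at offset $p = j - N_{c-1}$, so the inductive hypothesis applied to $T_c$ produces $T_c = T_c'' \circ_p t$, and assembling $T' := s(T_1, \ldots, T_c'', \ldots, T_k)$ gives $T = T' \circ_j t$, with canonicity transferred from $T$ and $T_c''$ to $T'$. In \emph{Case B}, the columns span multiple column blocks $c, \ldots, c'$; contiguity of the $a \times a$ sub-block in both $f(g)$ and $f(g^2)$ simultaneously forces the sequences $s(g)(c), \ldots, s(g)(c')$ and $s(g^2)(c), \ldots, s(g^2)(c')$ to be contiguous ranges of row-block indices and forces $j = N_{c-1}+1$ and $j+a-1 = N_{c'}$. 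A finite check against the five essential-node outputs in \cref{c3outputs}, combined with canonicity (reducedness and the ``no $\dia$ grafted on the right input of $\dia$'' condition for essential binary $\dia$), rules out all but the degenerate configuration in which $t = s$ and each $T_i$ is trivial, a situation already subsumed by Case A or the base case.

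The ``not $(j-1)$-column-simultaneous'' half of $(2) \Rightarrow (1)$ follows from $(1) \Rightarrow (2)$ by contradiction: such an instance would produce $T = T''' \circ_{j-1} t$, so $T$ would contain uncovered copies of $t$ with leaf-spans $\{j-1, \ldots, j+a-2\}$ and $\{j, \ldots, j+a-1\}$, which overlap in $\{j, \ldots, j+a-2\}$ and therefore cannot coexist in a single tree. I expect the principal obstacle to be Case B: specifically, the matrix-level argument that the rigid $(g, g^2)$-output pair of each essential node recorded in \cref{c3outputs} cannot be reassembled from fragments of several $f_i$'s glued under the outer $s$-action, except in the degenerate scenario. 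The canonicity hypothesis on $T$ is essential here, since it forbids exactly the associativity-reducible configurations that could otherwise produce spurious cross-block matches.
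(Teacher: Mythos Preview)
Your strategy---inducting on the size of $T$ and, for $(1)\Rightarrow(2)$, decomposing at the root $s$ into Case~A (pattern inside one child $T_c$) versus Case~B (pattern straddles several children)---is genuinely different from the paper's, which instead removes an \emph{uncovered} essential node $t'$ from $T$ and shows by explicit matrix diagrams that the $t$-pattern at column $j$ cannot partially overlap the $t'$-pattern at column $k$; that really is a finite check over ordered pairs of essential nodes, after which induction applies to the smaller tree.  Your Case~B is not finite in the way you suggest: row-contiguity of the $a\times a$ block across children yields constraints such as ``$f_c(g)$ and $f_c(g^2)$ both fix $n_c$'' and ``$f_{c+1}(g)$ and $f_{c+1}(g^2)$ both fix $1$,'' and these depend on the internal structure of the arbitrary subtrees $T_c,T_{c+1}$, not merely on $s$ and the table of essential-node outputs.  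Ruling them out needs a structural argument about canonical trees (for instance, that the second condition forces the leftmost spine of $T_{c+1}$ to consist entirely of $\otimes$-nodes, hence its root is $\otimes$), which your sketch does not supply.

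There is also a specific logical gap in your derivation of the ``not $(j-1)$'' clause of $(2)\Rightarrow(1)$.  You invoke $(1)\Rightarrow(2)$ at column $j-1$ to produce $T=T'''\circ_{j-1}t$, but hypothesis~(1) at $j-1$ demands \emph{both} a $(j-1)$-instance and the absence of a $(j-2)$-instance; you have only assumed the former.  Passing to the leftmost column $j_0$ of the run of consecutive instances repairs the hypothesis, but then the uncovered $t$ at $j_0$ and the given one at $j$ need not overlap once $j_0<j-a+1$, so no contradiction follows for the four binary essential nodes.  The paper treats this separately: for $t=\boxtimes$ one checks directly that adjacent $\boxtimes$-patterns cannot coexist in a permutation matrix; for binary $t$ one compresses the uncovered $t$ at $j$ to pass to $T'$, observes that the $(j-1)$-instance survives there, and then iterates the inductive hypothesis on the strictly smaller tree $T'$, eventually exhibiting $t$ grafted onto the right input of another copy of $t$ inside $T$---a violation of canonicity.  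That appeal to the canonical-tree condition is the missing ingredient in your argument.
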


\begin{proof}
We proceed by induction on $n$. For the base case $3$, one can derive the 29 possible canonical trees, and verify (1) and (2) are equivalent. 

Now, assume the statements are equivalent for all canonical trees of arity less than $n$, and let $T\in \cantree{n}$. Proving that (2) implies (1) for all five nodes is the easier direction, it follows from the definition of the operadic composition $\gamma$ and the fact that we are dealing with canonical trees. Indeed, if $T = T' \circ_j t$, operadic composition gives $f(g)$ and $f(g^2)$ containing a $j$-column-simultaneous instance of $t(g)$ and $t(g^2)$.
For $t =\boxtimes$, we cannot possibly have a $j-1$-column-simultaneous instance of $\boxtimes(g),\boxtimes(g^2)$ if we already have a $j$-column-simultaneous instance, since the result would be a pattern that cannot be a submatrix of a permutation matrix. This fact is illustrated below using $\boxtimes(g)$:
\[
\begin{tikzpicture}
	\matrix [matrix of math nodes] (m)
        		{
		0 & 0 & 1 &  \\ 
		1 & 0 & 0 & 1 \\ 
		0 & 1 & 0 & 0 \\ 
		   & 0 & 1 & 0  \\   
        		};   
        \draw[color = black] (m-1-1.north west) -- (m-1-3.north east) -- (m-3-3.south east)--(m-3-1.south west) -- (m-1-1.north west);
        \draw[color = black] (m-2-2.north west) -- (m-2-4.north east) -- (m-4-4.south east)--(m-4-2.south west) -- (m-2-2.north west);
        \draw[color = black] (m-2-1.west) -- (m-2-4.east);
\end{tikzpicture}
\qquad
\begin{tikzpicture}
	\matrix [matrix of math nodes] (m)
        		{
		    & 0 & 0 & 1 \\
		    & 1 & 0 & 0 \\ 
		 0 & 0 & 1 & 0 \\
		 1 & 0 & 0 &    \\
		 0 & 1 & 0 &    \\
        		};   
        \draw[color = black] (m-1-2.north west) -- (m-1-4.north east) -- (m-3-4.south east)--(m-3-2.south west) -- (m-1-2.north west);
        \draw[color = black] (m-3-1.north west) -- (m-3-3.north east) -- (m-5-3.south east)--(m-5-1.south west) -- (m-3-1.north west);
        \draw[color = black] (m-1-2.north) -- (m-5-2.south);
\end{tikzpicture}.\]

For $t = \otimes$, we use the inductive hypothesis. Note that in general, if $T = T' \circ_j t$, we can obtain the matrices corresponding to the values of the function for $T'$ by taking the columns and rows containing the $j$-column-simultaneous pattern and ``compressing'' them into a single row and column, where the entries in the pattern are compressed into a 1 and the entries outside of the pattern are compressed into a 0. The idea is shown below, where the pattern is encoded by the sub-matrix $A$: 
\[
\begin{matrix}
 && 0 &&  \\
&& \vdots  && \\
0 & \cdots & A & \cdots & 0\\
&& \vdots && \\
&& 0 && \\
\end{matrix} \qquad \to \qquad
\begin{matrix}
 && 0 &&  \\
&& \vdots  && \\
0 & \cdots & 1 & \cdots & 0.\\
&& \vdots && \\
&& 0 && \\
\end{matrix}\]

Suppose there is a $j-1$-column-simultaneous instance of $\otimes(g)$ and $\otimes(g)^2$ in $T=T'\circ_j t$. By the representation above, we see that  $\otimes(g)$ and $\otimes(g^2)$ occur $j-1$-column-simultaneously in $T'(g)$ and $T'(g^2)$. Since $T'$ itself is a canonical tree of lesser arity, we will use our inductive hypothesis to claim that the $\otimes$ we used to get to $T$ from $T'$ was grafted onto a ``tower of $\otimes$''. 

Indeed, if we follow the column-simultaneous instances of $\otimes(g)$ and $\otimes(g^2)$ as far left in the matrices for $T'$ as possible, say column $k\leq j-1$, then our inductive hypothesis tells us that $T'$ can be realized by grafting $\otimes$ on the $k$-th input of a canonical tree $T''$. If $k < j-1$, then by following the uncomposition, note the matrix of $T''$ has a $k$-column-simultaneous instance of $\otimes$, but not a $k-1$-column-simultaneous instance, but the $j-1$-column-simultaneous instance of $\otimes$ in $T'$ has now moved to column $j-2$. We can repeat the process until we have the original $j-1$-column-simultaneous instance of $T'$ in the $k$-th column, at which point we can return to $T'$ by regrafting $\otimes$ onto the right node of this $\otimes$. Namely, this tells us that $T = T' \circ_j \otimes$ contradicts $T$ being a canonical tree, since it must have been grafted onto the right node of $\otimes$ in $T'$. The reasoning is similar for the three other essential binary nodes. 

The heart of the inductive step is showing (1) implies (2), which is a five-in-one proof, one for each of the five essential primitive nodes. Again, we assume that the statements are equivalent for two canonical trees of arity less than n.


Let us begin with a canonical tree in $\cantree{n}$ representing some function $f$, and  assume that we have a $j$-simultaneous $\boxtimes$-pattern. There must be an uncovered node, $t'$ of arity $a$, attached to the $k$-th position of some canonical tree of lesser arity. 

If $k=j$ and $t'=\boxtimes$, we are done. If not, we remove $t'$ and call the new associated function $f'$. The matrices for $f'(g)$ and $f'(g^2)$ are obtained from the matrices of $f(g)$ and $f(g^2)$, respectively, by compressing the rows and columns containing the $k$-column-simultaneous patterns described above. The 1 in the compressed row and column corresponds to the unmarked leaf at the $k$-th entry of the new canonical tree. Now we want to show that not removing the entire $j$-column-simultaneous $\boxtimes$-pattern implies we have not removed any of the pattern. If this is the case, we have removed a node and arrived at a smaller arity canonical tree where we can use our induction hypothesis to say that the column-simultaneous $\boxtimes$-pattern indeed corresponds to an uncovered node, and that is not changed when we restore the removed node $t'$. We check for the impossibility of partial intersection for each of the five essential primitive nodes. 

We begin with $\boxtimes$ itself. Here it suffices to show that there is simply no way for $\boxtimes(g)$ to partially-intersect another $\boxtimes(g)$ without destroying the permutation matrix. Note that if the intersection contains a column or row with only 0s, then the pattern will contain two 1s in the same column or row, which means it can't be a subpattern of a permutation matrix. One can check that this is the case with the four possible intersections, two of which were shown earlier during the (2) implies (1) argument.
\[
\begin{tikzpicture}
	\matrix [matrix of math nodes] (m)
        		{
		    &    & 0 & 0 & 1 \\
		 0 & 0 & 1 & 0 & 0 \\ 
		 1 & 0 & 0 & 1 & 0 \\
		 0 & 1 & 0 &    &    \\
        		};   
        \draw[color = black] (m-2-1.north west) -- (m-2-3.north east) -- (m-4-3.south east)--(m-4-1.south west) -- (m-2-1.north west);
        \draw[color = black] (m-1-3.north west) -- (m-1-5.north east) -- (m-3-5.south east)--(m-3-3.south west) -- (m-1-3.north west);
        \draw[color = black] (m-3-1.west) -- (m-3-5.east);
\end{tikzpicture}
\qquad
\begin{tikzpicture}
	\matrix [matrix of math nodes] (m)
        		{
		     0 & 0 & 1 &    &    \\
		     1 & 0 & 0 &    &    \\
		     0 & 1 & 0 & 0 & 1 \\
		        &    & 1 & 0 & 0 \\
		        &    & 0 & 1 & 0  \\
        		}; 
        \draw[color = black] (m-1-1.north west) -- (m-1-3.north east) -- (m-3-3.south east)--(m-3-1.south west) -- (m-1-1.north west);  
        \draw[color = black] (m-3-3.north west) -- (m-3-5.north east) -- (m-5-5.south east)--(m-5-3.south west) -- (m-3-3.north west);
        \draw[color = black] (m-3-1.west) -- (m-3-5.east);
\end{tikzpicture}.
\]

Next is to check for the intersection of the $\boxtimes$-pattern with a primitive essential node $t'$ such that $t'(g) = 1_{\Sigma_2}$. There are six possible ways to intersect for $t'(g)$ and the $\boxtimes(g)$ pattern to intersect, with only one of them being feasible, the rest  having the same issue as above, that the intersection contains a row or column comprised of 0s:
\[
\begin{tikzpicture}
	\matrix [matrix of math nodes] (m)
        		{
			0 & 0 & 1 \\
			1 & 0 & 0 \\
			0 & 1 & 0 \\
		         		};   
        \draw[color = black](m-1-1.north west) -- (m-1-3.north east) -- (m-3-3.south east) -- (m-3-1.south west) -- (m-1-1.north west);
                \draw[color = black](m-2-1.north west) -- (m-2-2.north east) -- (m-3-2.south east) -- (m-3-1.south west) -- (m-2-1.north west);
\end{tikzpicture}.
\]
This case is a lost cause since we assume there is a column-simultaneous $\boxtimes(g)$ and $\boxtimes(g^2)$ pattern at that point. Regardless of $t'(g^2)$, it is an uncovered binary node, so we would need the first two columns of $\boxtimes(g^2)$ to form a $2 \times 2$ permutation matrix, but this is not the case, since $\boxtimes(g^2) = \begin{pmatrix}0 & 1 & 0 \\ 0 & 0 & 1 \\ 1 & 0 & 0\end{pmatrix}$.

Next, we assume $t'(g) = \begin{pmatrix} 0 & 1 \\ 1 & 0 \end{pmatrix}$. Note that this is not a submatrix in $\boxtimes(g)$, so in light of that, there are five possible overlaps, only one of which is feasible:
\[
\begin{tikzpicture}
	\matrix[matrix of math nodes] (m)
		{	    &    & 0 & 1 \\
			 0 & 0 & 1 & 0 \\
			1 & 0 & 0 &     \\
			0 & 1 & 0 &     \\
		};
	\draw(m-2-1.north west) -- (m-2-3.north east) -- (m-4-3.south east) -- (m-4-1.south west) -- (m-2-1.north west);
	\draw(m-1-3.north west) -- (m-1-4.north east)--(m-2-4.south east)-- (m-2-3.south west) -- (m-1-3.north west);
\end{tikzpicture}.
\]
 Again, this case runs into problems when considering $g^2$. No matter what $t'(g^2)$ is, we would break the permutation matrix for a similar intersection to occur on $\boxtimes(g^2)$:
\[
 \begin{tikzpicture}
	\matrix[matrix of math nodes] (m)
		{	 &&1&0\\
			 0 & 1 & 0 & 1 \\
			 0 & 0 & 1 &    \\
			 1 & 0 & 0 &     \\
		};
	\draw(m-2-1.north west) -- (m-2-3.north east) -- (m-4-3.south east) -- (m-4-1.south west) -- (m-2-1.north west);
	\draw(m-1-3.north west) -- (m-1-4.north east)--(m-2-4.south east)-- (m-2-3.south west) -- (m-1-3.north west);
	\draw(m-2-1.west) -- (m-2-4.east);
\end{tikzpicture}
\qquad
 \begin{tikzpicture}
	\matrix[matrix of math nodes] (m)
		{	 0 & 1 & 0 & 1 \\
			 0 & 0 & 1 & 0 \\
			1 & 0 & 0 &     \\
		};
	\draw(m-1-1.north west) -- (m-1-3.north east) -- (m-3-3.south east) -- (m-3-1.south west) -- (m-1-1.north west);
	\draw(m-1-3.north west) -- (m-1-4.north east)--(m-2-4.south east)-- (m-2-3.south west) -- (m-1-3.north west);
	\draw(m-1-1.west) -- (m-1-4.east);
\end{tikzpicture}
\qquad
\begin{tikzpicture}
	\matrix[matrix of math nodes] (m)
		{	0 & 1 & 0 &     \\
			0 & 0 & 1 &  0    \\
			1 & 0 & 0 & 1    \\
		};
	\draw(m-1-1.north west) -- (m-1-3.north east) -- (m-3-3.south east) -- (m-3-1.south west) -- (m-1-1.north west);
	\draw(m-2-3.north west) -- (m-2-4.north east)--(m-3-4.south east)-- (m-3-3.south west) -- (m-2-3.north west);
	\draw(m-3-1.west) -- (m-3-4.east);
\end{tikzpicture}
\qquad
\begin{tikzpicture}
	\matrix[matrix of math nodes] (m)
		{	0 & 1 & 0 &     \\
			0 & 0 & 1 &    \\
			1 & 0 & 0 & 1    \\
			& & 1&0\\
		};
	\draw(m-1-1.north west) -- (m-1-3.north east) -- (m-3-3.south east) -- (m-3-1.south west) -- (m-1-1.north west);
	\draw(m-3-3.north west) -- (m-3-4.north east)--(m-4-4.south east)-- (m-4-3.south west) -- (m-3-3.north west);
	\draw(m-3-1.west) -- (m-3-4.east);
\end{tikzpicture}.
\]

This covers the case for $\boxtimes$. We can do a similar argument for binary nodes. Let us begin with $t=\otimes$. 

 Like before, assuming the statements are equivalent for $\cantree{m}$ for $m < n$, we take a canonical tree in $\cantree{n}$, assume there is a $j$-column-simultaneous $\otimes$-pattern, and remove a top node $t'$ grafted in the $k$-th position. Assuming that the node $t'$ does not entirely intersect the $\otimes$-pattern, we must show that the node cannot intersect at all.

We need not consider the case of the removed node being a $\boxtimes$, since that was covered by the earlier case to not have a feasible intersection with $\otimes$. So instead, assume the removed node $t'$ satisfies $t'(g) = \begin{pmatrix} 0 & 1 \\ 1 & 0\end{pmatrix}$. It is not hard to check that there are no feasible intersections.

This argument generalizes to show the impossibility of any binary node intersecting with any binary node distinct from itself. So it suffices hereon to only discuss the case of self-intersection, which is taken care of by 
the assumption of canonical trees.

The two cases to consider are
\[
\begin{tikzpicture}
	\matrix[matrix of math nodes](m)
		{
			1 & 0 & \\
			0 & 1 & 0 \\
			   & 0 & 1 \\
		};
	\draw[dashed](m-1-1.north west) -- (m-1-2.north east) -- (m-2-2.south east) -- (m-2-1.south west) -- (m-1-1.north west);
	\draw(m-2-2.north west) -- (m-2-3.north east) -- (m-3-3.south east) -- (m-3-2.south west) -- (m-2-2.north west);
\end{tikzpicture}
\qquad
\begin{tikzpicture}
	\matrix[matrix of math nodes](m)
		{
			1 & 0 & \\
			0 & 1 & 0 \\
			   & 0 & 1 \\
		};
	\draw(m-1-1.north west) -- (m-1-2.north east) -- (m-2-2.south east) -- (m-2-1.south west) -- (m-1-1.north west);
	\draw[dashed](m-2-2.north west) -- (m-2-3.north east) -- (m-3-3.south east) -- (m-3-2.south west) -- (m-2-2.north west);
\end{tikzpicture}
\]
where the uncovered node $t'$ is denoted by the dashed line and the original $j$-column-simultaneous instanced is denoted by the solid line. Note that the first case cannot happen because of our assumption that there is no $(j-1)$-column simultaneous $\otimes$-pattern.

For the second case, note that after removing the uncovered node $t'$, which was grafted at $k=j+1$, we are left with $T' \in \cantree{n-1}$ which contains a $j$-column simultaneous instance of $t$. By the inductive hypothesis we have that $T'=T''\circ_j t$, and thus $T=(T''\circ_j t) \circ_{j+1} t'$, meaning that $T$ contains
\[
\begin{tikzpicture}[scale=0.75]
\begin{scope}[xshift=9pt,yshift=-5in,grow'=up,
frontier/.style={distance from root=150pt}]
\Tree [.$\otimes$ 
[.$\,$ ]
[.$\otimes$ 
[.$\,$ ] [.$\,$ ] ] ]
\end{scope}
\end{tikzpicture}
\]
as a subtree, implying that $T$ is not canonical. Thus, this case cannot happen either. This argument generalizes for the remaining binary nodes. 
\end{proof}

\begin{rem} The proof of \cref{cruciallem2} also tells us that there is no relation between $\boxtimes$ and itself. Since distinct graftings of $\boxtimes$ on itself yield distinct canonical trees, the proposition tells us that from the associated function, we can recover the canonical tree precisely by iteratively identifying uncovered nodes and decomposing the tree. Similarly, there are no relations between any two different essential primitive nodes. Also, any time we see a column-simultaneous pattern of size larger than 3, where the submatrices in the pattern are either the identity matrix or the antidiagonal, there are four essential binary nodes we can associate to these patterns (given by whether the submatrices for $g$ and $g^2$ are the identity or the antidiagonal). The leftmost $2\times 2$ block corresponds to that associated binary node being an uncovered node in the canonical tree corresponding to the pattern.

To illustrate the process of recovering the canonical tree corresponding to a primitive element, let us consider the following example: Suppose that $f \in \prim Q_{C_3}(7)$ with $f(g)$ and $f(g^2)$ corresponding the following two permutation matrices, respectively:
\[
\begin{tikzpicture}
\matrix[matrix of math nodes, left delimiter=(, right delimiter=)](m){
    0 & 0 & 0 & 0 & 0 & 0 & 1 \\
    0 & 0 & 0 & 0 & 1 & 0 & 0 \\
    0 & 0 & 0 & 0 & 0 & 1 & 0 \\
    1 & 0 & 0 & 0 & 0 & 0 & 0 \\
    0 & 1 & 0 & 0 & 0 & 0 & 0 \\
    0 & 0 & 1 & 0 & 0 & 0 & 0 \\
    0 & 0 & 0 & 1 & 0 & 0 & 0 \\
};
\draw (m-3-5.south west) rectangle (m-1-7.north east);
\draw (m-7-4.south west) rectangle (m-7-4.north east);
\draw (m-6-1.south west) rectangle (m-4-3.north east);
\end{tikzpicture},
\qquad
\begin{tikzpicture}
\matrix[matrix of math nodes, left delimiter=(, right delimiter=)](m){
    0 & 0 & 0 & 1 & 0 & 0 & 0 \\
    0 & 0 & 0 & 0 & 0 & 1 & 0 \\
    0 & 0 & 0 & 0 & 0 & 0 & 1 \\
    0 & 0 & 0 & 0 & 1 & 0 & 0 \\
    1 & 0 & 0 & 0 & 0 & 0 & 0 \\
    0 & 1 & 0 & 0 & 0 & 0 & 0 \\
    0 & 0 & 1 & 0 & 0 & 0 & 0 \\
};
\draw (m-4-5.south west) rectangle (m-2-7.north east);
\draw (m-1-4.south west) rectangle (m-1-4.north east);
\draw (m-7-1.south west) rectangle (m-5-3.north east);
\end{tikzpicture}.
\]
(The meaning of the boxes will be explained shortly.)  We want to find the canonical tree representation $T$ of $f$.

The reader might find \cref{c3outputs} helpful when following this process. From looking at the matrices, we see that the matrix corresponding to $\gamma(\otimes;\otimes,\beid)$ (identity matrix of size 3 for both $g$ and $g^2$) appear 1-column simultaneously in $f(g)$, $f(g^2)$. So the left-most uncovered node of $T$ is $\otimes$ and is grafted upon another $\otimes$.

Also, the matrices corresponding to $\boxtimes$ appear 5-column simultaneously in $f(g)$, $f(g^2)$. So  the right-most uncovered node of $T$ is $\boxtimes$. These two instances are blocked using solid lines in the matrices, and so is the remaining 1 in the fourth column. 

Now we replace the two column-simultaneous instances with 1, as described in the proof in \cref{cruciallem2}. One can see that the two new matrices are exactly the matrices corresponding to $\boxtimes$ for $g$ and $g^2$. Thus the node at the bottom of $T$ is labeled $\boxtimes$. In this way, we have obtained the canonical tree $T$ corresponding to $f$:

\[
\begin{tikzpicture}[scale=0.75]
\begin{scope}[xshift=9pt,yshift=-5in,grow'=up,
frontier/.style={distance from root=150pt}]
\Tree [.$\boxtimes$ [.$\otimes$ 
[.$\otimes$ 
[.$\,$ ] [.$\,$ ] ] [.$\,$ ] ]
[.$\,$ ]
[.$\boxtimes$ ] ]
\end{scope}
\end{tikzpicture}
\]
\end{rem}

We are now ready to complete the proof of \cref{thm:pres-c3} by proving \cref{prop:c3cantree}.

\begin{proof}[Proof of \cref{prop:c3cantree}]
Recall that $\phi \circ q$ is surjective. We prove that $\phi\circ q$ is injective by induction on the arity $n$ of the tree. The result is clear when $n=0,1$. If $n = 2$, there are only 4 elements in $\cantree{2}$, namely the binary essential primitive nodes, and they have different outputs at $g$ and $g^2$. As mentioned in the proof of \cref{cruciallem2}, one can check that the elements in $\cantree{3}$ represent different functions. Suppose this is true for all $m \leq n$, and consider the case $n+1$.

Let $T_1$, $T_2 \in \cantree{n+1}$ be such that they represent the same primitive function $f \in \osubbe{C_3}(n+1)$. Let $t$ be the left-most uncovered node in $T_1$. It corresponds to a $j$-column-simultaneous $t$-pattern in $f(g)$ and $f(g^2)$ satisfying condition (1) of \cref{cruciallem2}. Thus, we have that $T_1=T'_1\circ_j t$ and $T_2=T'_2\circ_j t$ for some canonical trees of lesser arity. Since the $T_1$ and $T_2$ represent the same function, the same is true for $T'_1$ and $T'_2$, since we are removing the same uncovered node. The inductive hypothesis tells us that $T_1'=T_2'$, and hence $T_1 = T_2$.\end{proof}

\section{Biased permutative equivariant categories for cyclic groups of order two and three}\label{sec:morphisms}

Now that we have explicit descriptions for the generators and the relations on $\osubbe{G}$ for $G=C_2$ and $C_3$, we turn to their categorical analogues and via \cref{thm:biased} give explicit biased descriptions of their algebras. 

We start with the definition of biased permutative equivariant categories, separating the cases of $C_2$ and $C_3$. For both, we denote by $g$ a chosen generator for the group.

\begin{defn}\label{dfn:biased-c2}
 A \emph{biased permutative $C_2$-category} consists of 
 \begin{itemize}
  \item a $C_2$-category $\cC$;
  \item a $C_2$-fixed object $e\in \cC$;
  \item a $C_2$-equivariant functor $\otimes \colon \cC \times \cC \to \cC$;
  \item a nonequivariant functor $\boxtimes \colon \cC \times \cC \to \cC$;
  \item a  $C_2$-natural isomorphism 
  \[
    \begin{tikzpicture}[x=1mm,y=1mm]
    \draw[tikzob,mm] 
    (0,0) node (00) {\cC \times \cC}
    (25,0) node (10) {\cC \times \cC}
    (12.5,-10) node (01) {\cC;}
    ;
    \path[tikzar,mm] 
    (00) edge node {\tau} (10)
    (10) edge node {\otimes} (01)
    (00) edge[swap] node {\otimes} (01)
    ;
    \draw[tikzob,mm]
    (12.5,-4) node {\Anglearrow{40} \beta}
    ;
  \end{tikzpicture}
  \]
   \item a nonequivariant natural isomorphism $\upsilon \colon \boxtimes \Rightarrow \otimes$ called the \emph{untwistor}, with components given by morphisms $\upsilon_{a,b}\colon a \boxtimes b \to a \otimes b$;
 \end{itemize}
 subject to the following axioms:
 \begin{enumerate}[(i)]
 \item $(\cC,\otimes,e,\beta)$ is a permutative category;
  \item $e$ is a strict two-sided unit for $\boxtimes$, that is, for all $a\in \cC$,
  \[e\boxtimes a = a = a \boxtimes e \qquad \text{and} \qquad \upsilon_{e,a}=\id_a=\upsilon_{a,e};\]
  \item $\boxtimes$ is strictly associative: for all $a,b,c\in \cC$,
  \[ a\boxtimes(b\boxtimes c) = (a \boxtimes b)\boxtimes c\]
  and the following diagram commutes
   \[
    \begin{tikzpicture}[x=30mm,y=15mm]
    \draw[tikzob,mm] 
    (0,0) node (0) {a\boxtimes (b \boxtimes c)}
    (0,-1) node (1) {a\otimes (b \boxtimes c)}
    (0,-2) node (2) {a\otimes (b \otimes c)}
    (1,0) node (3) {(a\boxtimes b) \boxtimes c}
    (1,-1) node (4) {(a\boxtimes b) \otimes c}
    (1,-2) node (5) {(a\otimes b) \otimes c;};
    \path[tikzar,mm] 
    (0) edge[swap] node { \upsilon_{a,b\boxtimes c}} (1)
    (1) edge[swap] node {\id_a \otimes \upsilon_{b,c}} (2)
    (3) edge node {\upsilon_{a\boxtimes b,c}} (4)
    (4) edge node {\upsilon_{a,b} \otimes \id_c} (5)
    (0) edge[/tikz/commutative diagrams/equal] (3)
    (2) edge[/tikz/commutative diagrams/equal] (5);
  \end{tikzpicture}
  \]
  \item for all $a,b\in \cC$
  \[g\cdot (a \boxtimes b) = (g\cdot b) \boxtimes (g\cdot a)\]
  and similarly for morphisms in $\cC$;
    \item for all $a,b \in \cC$, the following diagram commutes
    \[
    \begin{tikzpicture}[x=30mm,y=15mm]
    \draw[tikzob,mm] 
    (0,0) node (0) {g\cdot(a\boxtimes b)}
    (0,-2) node (1) {g\cdot(a\otimes b)}
    (1,0) node (2) {(g\cdot b)\boxtimes (g\cdot a)}
    (1,-1) node (3) {(g\cdot b)\otimes (g\cdot a)}
    (1,-2) node (4) {(g\cdot a)\otimes (g\cdot b).};
    \path[tikzar,mm] 
    (0) edge[swap] node {g\cdot \upsilon_{a,b}} (1)
    (2) edge node {\upsilon_{gb,ga}} (3)
    (3) edge node {\beta_{gb,ga}} (4)
    (0) edge[/tikz/commutative diagrams/equal] (2)
    (1) edge[/tikz/commutative diagrams/equal] (4);
  \end{tikzpicture}
  \]
 \end{enumerate}
\end{defn}

\begin{defn}\label{dfn:biased-c3}
 A \emph{biased permutative $C_3$-category} consists of 
 \begin{itemize}
  \item a $C_3$-category $\cC$;
  \item a $C_3$-fixed object $e\in \cC$;
  \item a $C_3$-equivariant functor $\otimes \colon \cC \times \cC \to \cC$;
  \item a functor $\boxtimes \colon \cC \times \cC \times \cC \to \cC$;
  \item a $C_3$-natural isomorphism 
  \[
    \begin{tikzpicture}[x=1mm,y=1mm]
    \draw[tikzob,mm] 
    (0,0) node (00) {\cC \times \cC}
    (25,0) node (10) {\cC \times \cC}
    (12.5,-10) node (01) {\cC;}
    ;
    \path[tikzar,mm] 
    (00) edge node {\tau} (10)
    (10) edge node {\otimes} (01)
    (00) edge[swap] node {\otimes} (01)
    ;
    \draw[tikzob,mm]
    (12.5,-4) node {\Anglearrow{40} \beta}
    ;
  \end{tikzpicture}
  \]
   \item a nonequivariant natural isomorphism $\upsilon$ called the \emph{untwistor}, whose components are given by morphisms $\upsilon_{a,b,c}\colon \boxtimes(a, b,c) \to (a \otimes b)\otimes c$.
    \end{itemize}
   To list the axioms, we note there are four (a priori distinct) binary operations: $\otimes$ and the three obtained from inserting $e$ in any of the positions of $\boxtimes$. There is also an associated instance of the untwistor for each of them. For example, if $a\dia b$  (temporarily) denotes $\boxtimes(e,a,b)$, we have
   \[\upsilon^\dia_{a,b}=\upsilon_{e,a,b}\colon a\dia b = \boxtimes(e,a,b) \longrightarrow (e\otimes a) \otimes b = a\otimes b.\]  The data above are subject to the following axioms
 \begin{enumerate}[(i)]
 \item $(\cC,\otimes,e,\beta)$ forms a permutative category;
  \item $e$ is a strict two-sided unit for all primitive binary operations, that is, for all $a\in \cC$ and all primitive binary operations $\dia$,
  \[e\dia a = a = a \dia e \qquad \text{and} \qquad \upsilon_{e,e,a}=\upsilon_{e,a,e}=\upsilon_{a,e,e}=\id_a;\]
  \item all primitive binary operations are strictly associative: for all $a,b,c\in \cC$ and all primitive binary operations $\dia$,
  \[a\dia(b\dia c) = (a \dia b)\dia c \]
  and the following diagram commutes
   \[
    \begin{tikzpicture}[x=30mm,y=15mm]
    \draw[tikzob,mm] 
    (0,0) node (0) {a\dia (b \dia c)}
    (0,-1) node (1) {a\otimes (b \dia c)}
    (0,-2) node (2) {a\otimes (b \otimes c)}
    (1,0) node (3) {(a\dia b) \dia c}
    (1,-1) node (4) {(a\dia b) \otimes c}
    (1,-2) node (5) {(a\otimes b) \otimes c;};
    \path[tikzar,mm] 
    (0) edge[swap] node { \upsilon^\dia_{a,b\dia c}} (1)
    (1) edge[swap] node {\id_a \otimes \upsilon^\dia_{b,c}} (2)
    (3) edge node {\upsilon^\dia_{a\dia b,c}} (4)
    (4) edge node {\upsilon^\dia_{a,b} \otimes \id_c} (5)
    (0) edge[/tikz/commutative diagrams/equal] (3)
    (2) edge[/tikz/commutative diagrams/equal] (5);
  \end{tikzpicture}
  \]
  \item for all $a,b,c\in \cC$
  \[g\cdot \boxtimes (a,b,c) =  \boxtimes (g\cdot c, g\cdot a, g\cdot b)\]
    and similarly for morphisms in $\cC$; 
    \item for all $a,b \in \cC$, the following diagram commutes
   \[
    \begin{tikzpicture}[x=40mm,y=15mm]
    \draw[tikzob,mm] 
    (0,0) node (0) {g\cdot \boxtimes (a,b,c)}
    (0,-2) node (1) {g\cdot(a\otimes b\otimes c)}
    (1,0) node (2) {\boxtimes (g\cdot c, g\cdot a, g\cdot b)}
    (1,-1) node (3) {(g\cdot c)\otimes (g\cdot a)\otimes (g\cdot b)}
    (1,-2) node (4) {(g\cdot a)\otimes (g\cdot b)\otimes (g\cdot c).};
    \path[tikzar,mm] 
    (0) edge[swap] node {g\cdot \upsilon_{a,b,c}} (1)
    (2) edge node {\upsilon_{gc,ga,gb}} (3)
    (3) edge node {\beta_{gc,ga\otimes gb}} (4)
    (0) edge[/tikz/commutative diagrams/equal] (2)
    (1) edge[/tikz/commutative diagrams/equal] (4);
  \end{tikzpicture}
  \]
 \end{enumerate}
\end{defn}

Note that in both definitions, $(\cC,\otimes, e,\beta)$ forms a naive permutative $G$-category, that is, a permutative category in which all the pieces are appropriately $G$-equivariant.

Recall from \cref{defn:QG} that $\subbe{G}$ is defined as the chaotic operad on $\osubbe{G}$, and thus, by the results of the previous section, we can describe $\subbe{G}$ as $\chao{\free{\{e,\otimes,\boxtimes\}}}/\langle R \rangle$, where $R$ is given by \cref{thm:pres-c2} for $G=C_2$ and by \cref{thm:pres-c3} for $G=C_3$.

\begin{thm}\label{thm:biased}
For $G=C_2$ and $C_3$, there is a one-to-one correspondence between biased permutative $G$-categories and algebras over $\subbe{G}$.
\end{thm}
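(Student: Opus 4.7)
The plan is to construct explicit maps in both directions between biased permutative $G$-categories and $\subbe G$-algebras, and then verify that they are mutually inverse. The key technical inputs in the reverse direction are Rubin's coherence theorem for normed symmetric monoidal categories \cite{RubinThesis}, the presentations \cref{thm:pres-c2,thm:pres-c3}, and the quotient recognition principle \cref{prop:chaoticQuotient}.

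For the forward direction, given a $\subbe G$-algebra $(\cC,\mu)$, I would extract the biased data by evaluating $\mu$ on the named objects and morphisms of $\subbe G$. The operations $e$, $\otimes$, and $\boxtimes$ are the images under $\mu$ of the corresponding generators of $\osubbe G$; the symmetry $\beta$ and untwistor $\upsilon$ are the natural transformations obtained by applying $\mu$ to the unique morphisms in $\subbe G(2)$ (respectively $\subbe G(3)$ in the $C_3$ case) connecting $\otimes$ with $\otimes\cdot(1\ 2)$ and $\boxtimes$ with an appropriate iterated $\otimes$-expression. Every axiom of \cref{dfn:biased-c2} or \cref{dfn:biased-c3} then follows from one of three sources: the strict unit and strict associativity relations of \cref{thm:pres-c2,thm:pres-c3}, the group-action relation together with equivariance of $\mu$, or chaoticity of $\subbe G(n)$, which forces every diagram of parallel morphisms to commute and hence sends all coherence diagrams into commuting diagrams in $\cC$.

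For the reverse direction, starting from a biased permutative $G$-category $\cC$, I would build an operad map $\subbe G \to \mathrm{End}_\cC$ in $G\Cat$. By \cref{thm:pres-c2,thm:pres-c3}, $\osubbe G \cong \free{\{e,\otimes,\boxtimes\}}/\langle R\rangle$, so on objects it suffices to send the generators to their namesakes in the biased structure and verify that the biased axioms imply the relations in $R$: strict unit and associativity come from axioms (ii)--(iii), while the $G$-action relations come from axiom (iv). On morphisms, \cref{prop:chaoticQuotient} reduces the construction to producing a map $\chao{\free{\{e,\otimes,\boxtimes\}}} \to \mathrm{End}_\cC$ which sends the morphism on each $R$-pair to an identity. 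Concretely, to each pair of objects $x,y \in \osubbe G(n)$ we must assign a natural transformation $\eta_{x,y}\colon F_x \Rightarrow F_y$ assembled from iterated $\beta$'s and $\upsilon$'s, and show this assignment is functorial and independent of how $\eta_{x,y}$ is factored. This is exactly what Rubin's coherence theorem provides: the biased axioms (in particular the hexagonal diagram (v) relating $\upsilon$, $\beta$, and the $G$-action) are the precise hypotheses guaranteeing that any two such composites between the same source and target agree.

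Mutual inversion is then a direct check on generators. The forward-then-reverse composite recovers the original algebra structure by the uniqueness clause of \cref{prop:chaoticQuotient} together with the fact that $\osubbe G$ is generated by $e$, $\otimes$, and $\boxtimes$; the reverse-then-forward composite recovers the biased structure by the very definition of $\beta$ and $\upsilon$ as the images of the designated morphisms. The main obstacle is the morphism-level construction in the reverse direction: correctly aligning the hypotheses of Rubin's coherence theorem with our biased axioms, so that $\eta_{x,y}$ is unambiguously defined and the resulting assignment is a functor. Once this alignment is made, the composition, $\Sigma$-equivariance, and $G$-equivariance axioms for the operad map are formal consequences of the corresponding compatibilities in the biased definition.
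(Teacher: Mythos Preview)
Your proposal is correct and relies on the same key ingredients as the paper: Rubin's coherence theorem, the presentations of \cref{thm:pres-c2,thm:pres-c3}, and \cref{prop:chaoticQuotient}. The paper's argument is organized slightly more compactly by first identifying the intermediate quotient $\chao{\free{\{e,\otimes,\boxtimes\}}}/\langle\text{group-action relations}\rangle$ with Rubin's operad $\mathcal{SM}_{\mathcal{N}}$ and invoking his packaged \cite[Theorem~2.10]{RubinThesis} directly, then passing to the further quotient via \cref{prop:chaoticQuotient}, whereas you unpack this into explicit forward and reverse constructions; the content is the same.
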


\begin{proof}
 This follows from \cite[Theorem 2.10]{RubinThesis} and \cref{prop:chaoticQuotient}.  More precisely, consider the operad 
 \[\oO=\chao{\free{\{e,\otimes,\boxtimes\}}}/\langle g\cdot e = e, g\cdot \otimes = \otimes, g\cdot \boxtimes = \boxtimes \cdot \si \rangle,\]
 where $\si=(1\ 2)$ if $G=C_2$ and $(1\ 2 \ 3)$ if $G=C_3$. Note that $\oO$ is the operad
 $\mathcal{S}\mathcal{M}_\mathcal{N}$ of \cite[Definition 2.17]{RubinThesis} in the case where $\mathcal{N}$ contains the single $G$-set given by $G$ itself with action by left multiplication (and a choice of ordering). Thus, \cite[Theorem 2.10]{RubinThesis} implies that $\oO$-algebras correspond precisely to $\mathcal{N}$-normed symmetric monoidal categories (cf. \cite[Definition 2.3]{RubinThesis} for details). Note that this definition is very similar to ours, with the exception that there is an underlying symmetric monoidal structure (not necessarily strictly associative and unital), and that axiom (v) is required to hold for all elements of the group. 
 
 The relations from \cref{thm:pres-c2,thm:pres-c3} and \cref{prop:chaoticQuotient} imply that the underlying symmetric monoidal structure in our algebras will be strictly associative and unital, with the associator $\alpha$ and the unit constraints $\lambda$ and $\rho$ equal to the identity. Similarly, these results give the relations between the different instances of $\upsilon$. Requiring axiom (v) for a group generator ensures it holds for all elements of the group. This gives the desired result.
\end{proof}

\begin{rem}
 Analogously to \cite[Definition 2.6]{RubinThesis}, one can define appropriate notions of strict, pseudo and lax maps between biased permutative $G$-categories. These each constitute the morphisms of various 2-categories whose objects are biased permutative $G$-categories. The theorem above extends to give isomorphisms between these 2-categories and the respective ones of $\subbe{G}$-algebras (see \cref{rem:2cats}).
\end{rem}

\bibliographystyle{amsalpha}
\bibliography{eqOp}

\end{document}